\definecolor{darkblue}{RGB}{0,0,160}
\theoremstyle{plain}
\newtheorem{theorem}{\bf Theorem}[section]
\newtheorem*{theorem*}{Theorem}
\newtheorem{proposition}[theorem]{\bf Proposition}
\newtheorem{lemma}[theorem]{\bf Lemma}
\newtheorem*{conjecture*}{\bf Conjecture}
\theoremstyle{definition}
\newtheorem{definition}[theorem]{\bf Definition}
\newtheorem{remark}[theorem]{\bf Remark}
\newtheorem{notation}[theorem]{\bf Notation}
\newtheorem{method}[theorem]{\bf Method}
\theoremstyle{remark}
\newtheorem{example}[theorem]{\bf Example}
\renewcommand\>{\rangle}
\newcommand\<{\langle}
\def \type{{\operatorname{type}}}
\def \depth{{\operatorname{depth}}}
\def \lk{{\operatorname{link}}}
\def \reg{{\operatorname{reg}}}
\def \pd{{\operatorname{pd}}}
\def \rk{{\operatorname{rk}}}
\def \G{{\Gamma}}
\def \aaa{{\bf a}}
\def \ZZ{\mathbb Z}
\def \D{\Delta}
\def \Dc{\Delta^{\mathrm{c}}}
\def \Ds{^{\operatorname{si\!}}\Delta}
\def \gin{\operatorname{gin}}
\def\kk{\Bbbk}
\begin{document}

\title{Generic and special constructions of pure $O$-sequences}

\author[A.~Constantinescu]{Alexandru Constantinescu}
\address{Mathematisches Institut, Freie Universität Berlin, Arnimallee
  3, 14195 Berlin, Germany}
\email{aconstant@math.fu-berlin.de}
\urladdr{\href{http://userpage.fu-berlin.de/aconstant/Main.html}{http://userpage.fu-berlin.de/aconstant/Main.html}}

\author[T.~Kahle]{Thomas Kahle}
\address {Fakultät für Mathematik, Otto-von-Guericke Universität,
  Universitätsplatz 2, 39106 Magdeburg, Germany.}
\email{thomas.kahle@ovgu.de}
\urladdr{\href{http://www.thomas-kahle.de}{http://www.thomas-kahle.de}} 

\author[M.~Varbaro]{Matteo Varbaro}
\address{ Dipartimento di Matematica,
 Universit\`a di Genova,
 Via Dodecaneso 35, Genova 16146, Italy}
\email{varbaro@dima.unige.it}
\urladdr{\href{http://www.dima.unige.it/~varbaro/}{http://www.dima.unige.it/~varbaro/}} 

\thanks{The research for this project was carried out during stays of
  the authors at CIRM, Trento and MSRI, Berkeley}

\date{{\small \today}}

\subjclass[2010]{Primary: 52B05, 05E40; Secondary: 13D40, 13E10, 13F55}

\keywords{Stanley's conjecture, $h$-vector, matroid, simplicial
  complex, generic initial ideal}

\begin{abstract}
  It is shown that the $h$-vectors of Stanley-Reisner rings of three
  classes of matroids are pure $O$-sequences.  The classes are
  (a)~matroids that are truncations of matroids, or more generally of
  Cohen-Macaulay complexes, (b)~matroids whose dual is
  $(\text{rank}+2)$-partite, and (c)~matroids of Cohen-Macaulay type
  at most five.  Consequences for the computational search for a
  counterexample to a conjecture of Stanley are discussed.
\end{abstract}

\maketitle
\section*{Introduction}
The $f$-vector and $h$-vector are fundamental invariants of a
simplicial complex, encoding the number of faces that the complex has
in each dimension.  What can be said in general about these vectors?
Starting from Euler's polyhedron formula in the middle of the 18th
century, different conditions and eventually characterizations have
been found.  It seems natural to ask for a description of the set of
$f$- or equivalently $h$-vectors of all simplicial complexes or all
pure simplicial complexes in a given dimension.  The situations for
these two classes are quite different.  There is a precise
characterization of the set of $f$-vectors of all simplicial complexes
due to Schützenberger, Kruskal, and
Katona~\cite[Theorem~II.2.1]{St96}.  The opposite is the case for pure
simplicial complexes---a characterization is believed to be
intractable.  As Ziegler points out, it would \emph{solve all basic
  problems in design theory}
\cite[Exercise~8.16]{ziegler00LecturesPolytopes}.  The celebrated
$g$-theorem characterizes $h$-vectors of simplicial polytopes
(\cite{billera1980sufficiency,billera1981proof,stanley1980number}) and
it is conjectured that this characterization also applies to
simplicial spheres (of which there are many more than boundaries of
simplicial polytopes~\cite{kalai88:_many}).  This indicates that
subclasses of pure complexes---like Gorenstein, Cohen-Macaulay, or
matroid complexes---may be feasible.  It is known for a long time,
essentially due to Macaulay, that the sets of vectors that arise as
$h$-vectors of Cohen-Macaulay complexes consist exactly of
\emph{$O$-sequences}---Hilbert functions of Artinian
algebras~\cite{macaulay1927some}.  Although necessary conditions are
known, characterizations for matroid or Gorenstein complexes are open
and may be out of reach.

In this paper we focus on matroids.  They were originally introduced
by Whitney as a way to study the concept of independence~\cite{whi35}.
Subsequently they appeared in a wide range of mathematical areas from
linear algebra, (real) algebraic geometry, and combinatorial geometry
to graph theory, optimization, and approximation theory.  The new
edition of Oxley's book~\cite{Ox11} provides an excellent guide to the
theory.  Interest in algebraic properties of matroids is still growing
as witnessed by recent work of DeConcini-Procesi~\cite{DP11},
Holtz-Ron~\cite{HR11}, Lenz~\cite{Le11b}, Moci~\cite{Mo12}, and
Huh~\cite{Hu12b,Hu12a}.

What properties should the $h$-vector of a matroid have?  Since
matroids are Cohen-Macaulay, their $h$-vectors must be $O$-sequences.
In \cite{S77} Stanley shows that they are also Hilbert functions of
Artinian algebras whose socle is concentrated in one degree.  He
conjectured that for any matroid one can even find a \emph{monomial}
algebra with this property.  In this case, its Hilbert function is
called a \emph{pure $O$-sequence}.
\begin{conjecture*}[{\cite[p.59]{S77}}]
  The $h$-vector of a matroid complex is a pure $O$-sequence.
\end{conjecture*}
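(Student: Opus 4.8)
The plan is not to attack the conjecture in full generality---which seems out of reach with current techniques---but to isolate structurally-defined subclasses of matroids on which the $h$-vector can be realized by an explicit pure monomial order ideal. The starting point is the standard algebraic translation. For a matroid $M$ on a finite ground set, its independence complex $\Delta$ is shellable and hence Cohen--Macaulay over every field $\kk$; a general Artinian reduction $A=\kk[\Delta]/\Theta$ is then a graded Artinian algebra whose Hilbert function is exactly the $h$-vector $(h_0,\dots,h_s)$. By Stanley's theorem this $A$ is \emph{level}, so its socle is concentrated in degree $s$ and the Cohen--Macaulay type equals $h_s$. The conjecture thus asks, for each such $M$, for a monomial order ideal $\mathcal{O}$ (a set of monomials closed under division) whose maximal elements all have degree $s$ and whose degree-$i$ component has cardinality $h_i$. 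I would attack this realization problem by three independent mechanisms, each tailored to one family.

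The first mechanism is truncation. I would first compute how the $h$-vector transforms when one passes from a Cohen--Macaulay complex to its truncation (lowering the rank by one): this is a fixed, explicitly computable linear rule involving partial sums of the $h_i$. On the order-ideal side I would then look for a matching operation---roughly, cutting a pure order ideal down to a prescribed top degree together with a suitable normalization by a slack variable---that preserves purity while reproducing the transformed $h$-vector. If such a correspondence can be set up, then every truncation of a Cohen--Macaulay complex whose $h$-vector is already a pure $O$-sequence again has a pure $O$-sequence $h$-vector; since Cohen--Macaulay $h$-vectors are $O$-sequences by Macaulay's theorem, this would bootstrap a substantial class from essentially no input hypothesis.

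The second mechanism is duality combined with partiteness: for matroids whose dual $M^{*}$ is $(\rk+2)$-partite I would exploit the rigidity imposed by the partition. Such a partition constrains the $h$-vector into a shape close to that of a direct sum of uniform matroids, and I would construct $\mathcal{O}$ as a product (over the parts) of intervals of monomials, verifying purity directly; the corank--nullity / Tutte-polynomial description of the $h$-vector through $M^{*}$ is the natural bookkeeping tool. The third mechanism handles small Cohen--Macaulay type, $\type(M)=h_s\le 5$. Here I would invoke the structure theory of pure $O$-sequences of small socle dimension: when $h_s=1$ the order ideal is generated by a single monomial and $h$ is forced to be a product of intervals, which agrees with the known shape of matroid $h$-vectors of type one; for $2\le h_s\le 5$ I would intersect the finitely many admissible shapes of pure $O$-sequences with the flawless and monotonicity-type inequalities that matroid $h$-vectors are known to satisfy, and produce a realizing order ideal by a finite case analysis.

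The main obstacle is precisely that none of these three strategies extends to all matroids, and this appears to be unavoidable. The purity-preserving operations available---truncation-type moves and products over disjoint variable sets---do not generate every matroid, and the case analysis for small type grows combinatorially intractable as $h_s$ increases, since products of pure $O$-sequences need not themselves be pure. Thus the central difficulty is the lack of a single purity-preserving construction rich enough to build every matroid $h$-vector. The realistic contribution is therefore to show that these three distinct, explicit mechanisms already cover large families, and to extract from them concrete constraints that a putative counterexample must satisfy, thereby guiding and substantially restricting any computational search.
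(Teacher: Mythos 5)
You have correctly judged that the conjecture itself is out of reach and that the realistic goal is to verify it on structurally-defined subclasses; the three families you isolate (truncations, duals of $(\mathrm{rank}+2)$-partite matroids, type at most five) are exactly the three the paper treats. But the mechanisms you propose for each family diverge from the paper's, and two of them have concrete gaps. For truncations, the paper does not work with a combinatorial ``transformation rule'' on $h$-vectors matched by an operation on order ideals; it works algebraically: it shows (Lemma~\ref{lem:gin}, via extremal Betti numbers and the Bayer--Stillman theorem) that if $\pd(I_{<k})<\pd(I)$ then the last Betti number of $I$ survives passage to $\gin(I)$, deduces that the generic initial ideal of the Stanley--Reisner ideal of a skeleton of a Cohen--Macaulay complex is \emph{level}, and then takes the Artinian reduction of the strongly stable $\gin$ by variables to get a monomial level algebra. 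Note also that your bootstrap is misstated: the input hypothesis is only that the ambient complex is Cohen--Macaulay (so its $h$-vector is an $O$-sequence, not a \emph{pure} one), and purity is \emph{created} by the truncation, not preserved by it.

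The two genuine gaps are in your second and third mechanisms. A product of intervals of monomials over the parts realizes only joins of complete matroids, i.e.\ the case where the dual graph $G_\Delta$ is bipartite (Proposition~\ref{prop:bipartite}); the general $(d+2)$-partite case is not ``close to a direct sum of uniform matroids'' and requires the recursive deletion/link gluing of Lemma~\ref{lem:compatibility}, whose compatibility condition~\eqref{eq:comp} is the real technical obstacle and cannot be bypassed by a product construction. For small type, your ``finite case analysis over admissible shapes of pure $O$-sequences with $h_s\le 5$'' is not finite: the socle degree $s$ is unbounded (the paper stresses ``with no restriction on $s$''), so there are infinitely many candidate shapes and the analysis never terminates. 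The paper instead uses the inequality $\type \ge p-d+1$ from \cite{CV3} to bound the number of parallel classes by $d+4$, reduces $p\le d+2$ to Theorem~\ref{thm:d2}, identifies the unique matroid attaining $p=d+4$ (Proposition~\ref{prop:typebound}), and eliminates $p=d+3$ by combining the Brown--Colbourn inequalities with a finite computer search over rank-three matroids on at most nine elements. Without some device that bounds the combinatorial complexity of the matroid (rather than of the pure $O$-sequence), your third mechanism does not close.
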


For an abstract simplicial complex $\Delta$ on $[n] := \{1,\dots,
n\}$, let $f_{i}(\Delta)$ be the number of faces of size~$i$. Let $d =
\max\{i : f_i\neq 0\}$ be the \emph{rank} of~$\D$.  The vector
$f=(f_{0},\dots,f_{d})$ is the \emph{f-vector} of~$\Delta$.  It
encodes the same information as the \emph{h-vector} $h(\Delta)
=(h_{0},\dots,h_{s})$ whose component $h_{i}$ is the coefficient of
$x^{d-i}$ in the polynomial $\sum_{i=0}^{d}f_{i}(x-1)^{d-i}$.  A
central tool for the study of the $h$-vector is the
\emph{Stanley-Reisner ring $\kk[\D] := \kk[x_{1},\dots,x_{n}] /
  I_{\D}$}, where $I_{\D} = \left(\prod_{i\in G}x_{i} : G \notin \D
\right)$ is the \emph{Stanley-Reisner ideal}.  In this setting, the
$h$-vector appears as the coefficient vector of the numerator
polynomial of the Hilbert series of~$\kk[\Delta]$ (see~\cite{St96}).
The field $\kk$ in this definition is arbitrary, and homological
properties of $\kk[\Delta]$ may depend on the characteristic.
However, Stanley's conjecture is field independent.

The problem raised by Stanley is extremely difficult and the authors are not strong
believers in the validity of the conjecture.  The complications are in
part due to the strange properties of pure $O$-sequences.  For
instance, they need not be unimodal, and it is likely that they can
not be characterized well~\cite{BMMNZ}.  On the positive side, it is
known that both pure $O$-sequences and $h$-vectors of matroid
complexes satisfy a common set of inequalities
\cite{CH97,Hi98}:
\begin{equation*} h_{0}\leq h_{1}\leq\dots\leq h_{\lfloor\frac{s}{2}\rfloor},
 \qquad h_i\le h_{s-i} \text{ for } 0\le i\le\lfloor\frac{s}{2}\rfloor.
\end{equation*}
In contrast, the Brown-Colbourn inequalities 
\begin{equation*}
  \text{for any $b \geq 1$ } \quad (-1)^{j} \sum_{i=0}^{j}(-b)^{i}h_{i}
  \geq 0, \quad 0 \leq j \leq s.
\end{equation*}
hold for $h$-vectors of matroids, but not pure
$O$-sequences~\cite{brown1992roots}.  Other than this our
understanding is poor.  Positive answers to Stanley's conjecture are
known for short $h$-vectors~\cite{DKK,HSZ}, and for special classes of
matroids~\cite{Me01,MNRV12,Oh12}.  In the present paper we prove that
Stanley's conjecture holds for matroids that are truncations of other
matroids and for matroids whose $h$-vector $(1,h_{1},\dots, h_{s})$
satisfies $h_s\le 5$ (with no restriction on $s$).  We employ two
completely different methods of proof, both of which have potential
for generalizations. As a consequence of our results, the search for
counterexamples is pushed closer to today's computational limits.

\subsection*{\texorpdfstring{Generic pure $O$-sequences}{Generic pure O-sequences}}
The Stanley-Reisner ring $\kk[\Delta]$ of a matroid $\D$ is level.  To
produce a pure $O$-sequence which equals the $h$-vector of $\Delta$ it
would suffice to pass to a \emph{monomial} Artinian reduction.
Unfortunately, a monomial ideal rarely has one.  In this context, the
generic initial ideal may come to mind.  It has the same $h$-vector as
the original ideal and (in characteristic zero) is strongly stable.
Therefore it possesses a regular sequence of variables and a monomial
Artinian reduction.  However, this does not prove Stanley's conjecture
as typically the quotient modulo the generic initial ideal is not
level.  We envision an approach to Stanley's conjecture in which one
interpolates between these two objectives with a less drastic version
of the generic initial ideal (Remark~\ref{rem:lgin}).  In
Section~\ref{sec:gin} we study this genericity of matroids and show
that a generalization of Stanley's conjecture holds for all simplicial
complexes that are truncations (skeletons) of matroids
(Theorem~\ref{thm:StanleyForTruncation}).

\subsection*{\texorpdfstring{Special pure $O$-sequences}{Special pure O-sequences}}
In matroid theory duality is central.  If $\Delta$ is a matroid, then
the complex $\Dc$ whose facets are the complements of facets of
$\Delta$ is the \emph{dual matroid}.  Directly from the definitions,
its Stanley-Reisner ideal $I_{\Dc}$ equals the \emph{cover
  ideal} $J(\Delta)$ of~$\Delta$.  In this paper $h_{\Delta}$ is the
$h$-vector of (the quotient by) $I_{\Delta}$ and $h^{\Delta}$ that of
(the quotient by) $J(\Delta)$.  By matroid duality it suffices to
prove Stanley's conjecture for either of the classes.  Several known
results on matroid complexes are stated in terms of the dual
matroid~\cite{DKK,Me01,Oh12}, which may be taken as an indication that
that the cover ideal is a natural object.  This perspective permeates
the work of the first and third author and also our
Section~\ref{sec:partite}, where we aim at a generalization of the
construction of pure $O$-sequences in~\cite{CV3}.  This construction
is recursive and relies on finding pure $O$-sequences for links and
deletions in the matroid.  When trying to generalize the construction
we require a \emph{compatibility condition}
(Lemma~\ref{lem:compatibility} and Definition~\ref{def:partition
  comp}) the checking of which remains an obstacle.  Carefully keeping
track of the contributions in the recursion allows us to prove
Stanley's conjecture for duals of matroids with at most $\text{rank} +
2$ parallel classes (Theorem~\ref{thm:d2}).  Exploiting the
constraints on the $h$-vectors of matroids whose dual has a fixed
number of parallel classes, proved in \cite{CV3}, we can show
Stanley's conjecture when the type is at most five
(Theorem~\ref{thm:type5}).

\subsection*{The search for a counterexample}

Matroids on nine or fewer elements have been enumerated by Mayhew and
Royle~\cite{MR} and Stanley's conjecture has been confirmed for all of
them in~\cite{DKK}.  Beyond nine vertices, mostly due to the lack of a
good list of candidates, only sporadic experiments have been carried
out.  Our results have implications for the search for a
counterexample.  By Theorem~\ref{thm:type5}, any candidate
counterexample must be of Cohen-Macaulay type at least six.  To
confirm such a counterexample in silico would include enumeration of
all $\binom{N}{6}$ socles where $N$ is a binomial coefficient (see
Example~\ref{ex:doubleBinomial}).  The methods of
Section~\ref{sec:partite}, in particular
Lemma~\ref{lem:compatibility}, imply faster searches for pure
$O$-sequences realizing the $h$-vector of the cover ideal of a given
matroid.  In Section~\ref{sec:computer} we discuss our computational
efforts.  As part of this project we developed a small
\verb!C++!-library which can be used to enumerate pure $O$-sequences
The source code is available at
\url{https://github.com/tom111/GraphBinomials} and is licensed under
the~GPL.  We also made intensive use of Cocoa~\cite{CocoaSystem},
Macaulay2~\cite{M2} and Sage~\cite{sage}.

\subsection*{Acknowledgement}
We are grateful to Fondazione Bruno Kessler for funding a two week
stay at CIRM Trento where this project was started, and to MSRI, where
this paper took shape.  We thank Alex Fink for valuable suggestions
and comments.  

\section{Linear resolutions and the generic initial ideal}
\label{sec:gin}
Let $S=\Bbbk[x_1,\ldots ,x_n]$ be a polynomial ring over a field
$\kk$.  For any ideal $I\subseteq S$ we denote $\gin(I)$ the
\emph{generic initial ideal} with respect to the graded reverse
lexicographic term order.  Any graded $S$-module $M$ has a minimal
graded free resolution:
\[
0\rightarrow F_p\xrightarrow{\delta_{p}}F_{p-1}\rightarrow \ldots
\rightarrow F_1 \xrightarrow{\delta_1}
F_0\xrightarrow{\delta_0}M\rightarrow 0,
\] 
in which $F_i=\bigoplus_{j\in\ZZ}S(-j)^{\beta_{i,j}(M)}$.  Let
$Z_i(M)=\ker \delta_{i}$ be the $i$th syzygy module of~$M$. The module $M$
has a \emph{$k$-linear resolution} if $\beta_{i,j}(M)= 0$ whenever
$j\neq i+k$. It is \emph{componentwise linear} if $M_{\langle
  k\rangle}$ has $k$-linear resolution for all $k\in\ZZ$, where
$M_{\langle k\rangle}$ is the submodule of $M$ generated by all
homogeneous elements of degree~$k$.  It is not difficult to show that,
if $M$ has a linear resolution, then it is componentwise linear, for
example, using \cite[Corollary~2.5]{CH03}.
Linearity of the free resolution is a genericity condition.  This
intuition is justified by
\begin{theorem}[{\cite[Theorem~1.1]{AHH}}]\label{thm:AHH}  Let
  $\text{char}(\kk) = 0$.  An ideal $I\subset S$ is componentwise
  linear if and only if $\beta_{i,j}(S/I)=\beta_{i,j}(S/\gin(I))$ for
  all~$i,j$.
\end{theorem}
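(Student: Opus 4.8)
The plan is to build on two structural properties of the graded reverse lexicographic generic initial ideal. The first is the general inequality $\beta_{i,j}(S/I)\le\beta_{i,j}(S/\gin(I))$, valid for every graded ideal, which comes from the fact that passing to an initial ideal can only increase graded Betti numbers; this inequality is the rigidity device that turns numerical equalities into structural statements. The second is that, since $\operatorname{char}(\kk)=0$, Galligo's theorem guarantees that $\gin(I)$ is strongly stable. Consequently its minimal free resolution is the Eliahou--Kervaire resolution, $\gin(I)$ is itself componentwise linear, and $\reg(\gin(I))$ equals the top degree of a minimal generator. I would also use the Bayer--Stillman theorem that $\reg(I)=\reg(\gin(I))$, together with the elementary fact that an ideal generated in a single degree $d$ has a $d$-linear resolution precisely when its regularity equals $d$, in which case its graded Betti numbers are determined by its Hilbert function alone.

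For the implication from componentwise linearity to equality of Betti numbers, I would first analyze the single-degree pieces. Fixing $k$, the piece $I_{\langle k\rangle}$ has a $k$-linear resolution by hypothesis, so $\reg(I_{\langle k\rangle})=k$; since regularity is preserved and $\gin(I_{\langle k\rangle})$ is strongly stable and generated in the bottom degree $k$, it is again generated in the single degree $k$ with regularity $k$, hence has a $k$-linear resolution. As linear resolutions are forced by the Hilbert function and $I_{\langle k\rangle}$ and $\gin(I_{\langle k\rangle})$ share a Hilbert function, their Betti numbers coincide. To pass to the global Betti numbers I would invoke the Herzog--Hibi formula for componentwise linear ideals,
\[
\beta_{i,i+j}(I)=\beta_i(I_{\langle j\rangle})-\beta_i(\mm\, I_{\langle j-1\rangle}),
\]
and the same formula applied to the componentwise linear ideal $\gin(I)$; matching the single-degree contributions on both sides then yields $\beta_{i,j}(S/I)=\beta_{i,j}(S/\gin(I))$ for all $i,j$.

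For the converse I would argue by contraposition. If $I$ is not componentwise linear, choose $j$ minimal with $I_{\langle j\rangle}$ not having a $j$-linear resolution, so $\reg(I_{\langle j\rangle})>j$. Since $\gin(I)$ is componentwise linear, its degree-$j$ piece is linear, and tracking the defect of $I_{\langle j\rangle}$ through the general inequality $\beta_{i,j}(S/I)\le\beta_{i,j}(S/\gin(I))$ should produce a strand in which the inequality is strict: the non-linearity of $I_{\langle j\rangle}$ introduces a Betti number of $\gin(I)$ off the expected linear strand that has no counterpart in $I$, contradicting equality. This is the step where I would lean on the consecutive-cancellation description of how $\beta(S/I)$ relates to $\beta(S/\gin(I))$.

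The main obstacle is the compatibility of $\gin$ with the componentwise decomposition, namely that the Betti numbers (equivalently, the Hilbert functions) of $\gin(I)_{\langle j\rangle}$ and of $\gin(I_{\langle j\rangle})$ agree, so that the Herzog--Hibi contributions on the two sides can be matched piece by piece. This identity is special to the reverse lexicographic order and to generic coordinates, and proving it carefully---reconciling the single-degree truncation operation $(-)_{\langle j\rangle}$ with the generic coordinate change defining $\gin$---is the technical heart of the argument; everything else is assembled from the standard inequality, Galligo's and Bayer--Stillman's theorems, and the Eliahou--Kervaire resolution.
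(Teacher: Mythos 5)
The paper does not actually prove Theorem~\ref{thm:AHH}: it is quoted from Aramova--Herzog--Hibi \cite{AHH} as a black box, so there is no internal proof to compare your attempt against. Judged on its own, your outline of the forward implication follows the standard route (Galligo, Eliahou--Kervaire, the Herzog--Hibi decomposition $\beta_{i,i+j}(I)=\beta_i(I_{\langle j\rangle})-\beta_i(\mm I_{\langle j-1\rangle})$, and the fact that Betti numbers of modules with linear resolution are determined by the Hilbert function), but it is not complete: the step you yourself flag---that $\gin(I)_{\langle j\rangle}$ and $\gin(I_{\langle j\rangle})$ have the same Hilbert function---is a genuine lemma, not a routine verification. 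One always has $\gin(I)_{\langle j\rangle}\subseteq\gin(I_{\langle j\rangle})$, since both are generated by the same degree-$j$ component $\operatorname{in}(I_j)$, but the containment is strict for general ideals; equality amounts to $\dim_\kk \mm^{k}\operatorname{in}(I_j)=\dim_\kk \mm^{k}I_j$ for all $k$, which is exactly where the hypothesis on $I$ and the genericity of the coordinates must be used. Deferring ``the technical heart'' leaves the forward direction as an outline rather than a proof.

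The converse is where the proposal would actually fail as written. You argue by contraposition that if some $I_{\langle j\rangle}$ is non-linear, then ``tracking the defect'' through the inequality $\beta_{i,j}(S/I)\le\beta_{i,j}(S/\gin(I))$ ``should produce'' a strict inequality somewhere. But componentwise linearity is a property of the resolutions of the auxiliary modules $I_{\langle j\rangle}$, not of $I$ itself, and consecutive cancellation only compares the two Betti tables of $I$ and of $\gin(I)$; nothing in that picture explains how a nonlinear syzygy of $I_{\langle j\rangle}$ manifests itself as an extra Betti number of $\gin(I)$ with no counterpart in $I$. Supplying that mechanism is the actual content of this half of the theorem: in the literature it is done by induction on the generating degrees, relating $I$ to the subideals $I_{<k}$ via the crystallization principle---very much in the spirit of the paper's own Lemma~\ref{lem:gin}, which compares $\gin(I)_{<k}$ with $\gin(I_{<k})_{<k}$ for exactly this purpose. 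Without such an argument, the claimed ``strand where the inequality is strict'' is an expectation, not a proof.
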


Since $I = Z_{0} (S/I)$, one may ask which conclusions are implied if
$Z_{i}(S/I)$ is componentwise linear.  The following result gives one
direction.

\begin{proposition}[{\cite[Theorem~5.7]{caviglia2013componentwise}}]\label{prop:CS}
  Let $I\subset S$ be a graded ideal such that
  $\beta_{i,j}(S/I)=\beta_{i,j}(S/\gin(I))$ for all $i>s+1$ and
  $j\in\ZZ$. Then $Z_s(S/I)$ is componentwise linear.
\end{proposition}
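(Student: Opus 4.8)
The plan is to realise $Z_s(S/I)$ explicitly as a truncation of the resolution of $S/I$ and then to transport componentwise linearity from the generic initial side. First I would split the minimal free resolution of $S/I$ at the $(s+1)$-st spot: since $Z_s(S/I)=\ker\delta_s=\operatorname{im}\delta_{s+1}$, the complex $0\to F_p\to\cdots\to F_{s+1}\to Z_s(S/I)\to 0$ is a minimal free resolution, whence $\beta_{i,j}(Z_s(S/I))=\beta_{s+1+i,j}(S/I)$ for all $i\ge 0$, and likewise with $\gin(I)$ in place of $I$. Feeding in the hypothesis $\beta_{i,j}(S/I)=\beta_{i,j}(S/\gin(I))$ for $i>s+1$ yields $\beta_{i,j}(Z_s(S/I))=\beta_{i,j}(Z_s(S/\gin(I)))$ for every $i\ge 1$; the two modules can disagree only in their minimal generators. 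Shifting by one more spot gives the sharper fact that $Z_{s+1}(S/I)$ and $Z_{s+1}(S/\gin(I))$ have identical graded Betti numbers in all homological degrees.

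Next I would settle the generic initial side. As $\text{char}(\kk)=0$, the ideal $\gin(I)$ is strongly stable, and the Eliahou--Kervaire formula for its Betti numbers shows $\beta_{i,j}(S/\gin(I))\ne 0\Rightarrow\beta_{i-1,j-1}(S/\gin(I))\ne 0$ for all $i\ge 2$. Through the truncation above this is exactly the statement that each syzygy module $Z_t(S/\gin(I))$ with $t\ge 1$ is componentwise linear, using the characterisation of componentwise linearity by the shape of the Betti table, namely its being a union of complete linear strands issuing from the generators. In particular $Z_s(S/\gin(I))$ is componentwise linear, and, combined with the identical Betti numbers from the first step, so is $Z_{s+1}(S/I)$.

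It then remains to verify the same strand condition for $Z_s(S/I)$ itself. In homological degrees $\ge 2$ it is automatic, because there the Betti numbers of $Z_s(S/I)$ agree with those of the componentwise linear module $Z_s(S/\gin(I))$. The whole difficulty concentrates in the passage from first syzygies to generators, i.e.\ in showing that $\beta_{s+2,k}(S/I)\ne 0$ forces $\beta_{s+1,k-1}(S/I)\ne 0$: reading this off the presentation $0\to Z_{s+1}(S/I)\to F_{s+1}\to Z_s(S/I)\to 0$, a degree-$k$ minimal generator of the componentwise linear module $Z_{s+1}(S/I)$ must be shown to meet a degree-$(k-1)$ generator of the minimal cover $F_{s+1}$ with a linear coefficient. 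This step cannot rest on the higher Betti numbers alone---already the complete intersection $I=(x^2,y^2)$, for which the implication fails, shows that the genericity hypothesis is essential---and it is here that I expect the main obstacle.

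To overcome it I would use the consecutive-cancellation description of how $\beta(S/I)$ arises from $\beta(S/\gin(I))$: the hypothesis confines every cancellation to a spot $(i-1,i)$ with $i\le s+1$, so none reaches homological degree $\ge s+2$ and the class at $(s+2,k)$ survives untouched. What must still be excluded is that cancellations at the spot $(s,s+1)$ wipe out every degree-$(k-1)$ class at spot $s+1$; since this cannot be decided by cancellation bookkeeping, I would instead argue with the module structure, peeling off syzygies one at a time via the short exact sequences of the first step and using that passing to $\gin$ preserves the linear part of the resolution, so that the strand $(s+2,k)\to(s+1,k-1)$ inherited from the strongly stable ideal $\gin(I)$ persists for $I$.
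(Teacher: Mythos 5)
The paper offers no proof of this proposition---it is imported verbatim from \cite[Theorem~5.7]{caviglia2013componentwise}---so your attempt can only be judged on its own terms, and there it has a genuine gap: you reduce componentwise linearity to a condition on the graded Betti table (``a union of complete linear strands issuing from the generators''), and no such reduction is available. That condition is necessary but not sufficient. For instance, the Taylor complex of $I=(x^2,y^2,xyz)\subset\kk[x,y,z]$ is minimal, so the Betti table of $I$ is $\beta_{0,2}=2$, $\beta_{0,3}=1$, $\beta_{1,4}=3$, $\beta_{2,5}=1$: a union of complete linear strands emanating from the generator degrees $2$ and $3$. Yet $I_{\langle 2\rangle}=(x^2,y^2)$ has its only syzygy in degree $4$, so $I$ is not componentwise linear. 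Componentwise linearity is simply not visible in the graded Betti numbers of the module alone---this is exactly why Theorem~\ref{thm:AHH} compares Betti numbers against those of the generic initial ideal instead of imposing a shape condition. Consequently, even if you established the strand implication $\beta_{s+2,k}(S/I)\neq 0\Rightarrow\beta_{s+1,k-1}(S/I)\neq 0$ that you single out as the remaining difficulty, the theorem would not follow: you must show that each submodule $(Z_s(S/I))_{\langle k\rangle}$ has a $k$-linear resolution, which is module-theoretic information that the Betti table of $Z_s(S/I)$ does not carry. The same defect affects the intermediate claim that $Z_t(S/\gin(I))$ is componentwise linear for $t\geq 1$; that statement is true, but it requires an actual argument (for example via the explicit structure of the Eliahou--Kervaire resolution), not an appeal to the shape of the Betti table.

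Beyond this, the step you yourself flag as the main obstacle is never closed. The consecutive-cancellation bookkeeping only restates the hypothesis (no cancellation touches homological degree $\geq s+2$), and the assertion that ``passing to $\gin$ preserves the linear part of the resolution, so that the strand persists for $I$'' is precisely the content that would need proof; as stated it is not a theorem you can cite. What is correct and worth keeping is your first paragraph: the identifications $\beta_{i,j}(Z_s(S/I))=\beta_{s+1+i,j}(S/I)$, hence $\beta_{i,j}(Z_s(S/I))=\beta_{i,j}(Z_s(S/\gin(I)))$ for all $i\geq 1$, and the observation that the two modules can differ only in their minimal generators. But the bridge from ``equal Betti numbers in positive homological degrees'' to ``componentwise linear'' is the entire substance of the result, and it is missing here.
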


In general, the other implication in Proposition~\ref{prop:CS} does
not hold (Example~\ref{ex:caviglia-counter}).  In fact, it would imply
Stanley's conjecture for cover ideals of simple matroids.  To see
this, let $I\subset S$ be an ideal such that $S/\gin(I)$ is level.
In characteristic zero, the
generic initial ideal is strongly stable and thus $x_n,x_{n-1},\ldots
,x_{d+1}$ is a regular sequence in~$S/\gin(I)$.  The Artinian
reduction $S/(\gin(I)+(x_n,x_{n-1},\ldots ,x_{d+1}))$ is an Artinian
level monomial algebra with the same $h$-vector as~$S/I$.  In fact,
having a binomial regular sequence would suffice to ensure monomiality
of the quotient (see Remark~\ref{rem:lgin}).  Consequently, the
$h$-vector of $S/I$ is a pure $O$-sequence.  If the converse of
Proposition \ref{prop:CS} were true, then the $h$-vector of any level
algebra whose second to last syzygy module is componentwise linear would be a
pure $O$-sequence.  This is the case for cover ideals of \emph{simple}
matroids, that is matroids without parallel elements:
\begin{proposition}\label{prop:bettiImpliesSyzCompLin}
  Let $\D$ be a rank $d$ simple matroid on $n$ vertices. Then
  $\beta_{d-1,j}(S/J(\D))\neq 0$ only for $j=n-1$.  In particular,
  $Z_{d-2}(S/J(\D))$ is componentwise linear.
\end{proposition}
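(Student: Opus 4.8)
The plan is to pass to the dual matroid and compute the relevant graded Betti numbers with Hochster's formula. Write $M := \Dc$ for the dual matroid, so that $S/J(\D) = \kk[M]$ is the Stanley--Reisner ring of a matroid of rank $n-d$. Since matroid complexes are Cohen--Macaulay, $\kk[M]$ is Cohen--Macaulay of Krull dimension $n-d$, whence $\pd_S(S/J(\D)) = d$ by Auslander--Buchsbaum. Thus $F_{d-1}$ and $F_d$ are the two top free modules of the minimal resolution, and $\beta_{d-1,j}(S/J(\D))$ is precisely the second-to-top graded Betti number I need to control.

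First I would invoke Hochster's formula
\[
  \beta_{d-1,j}(S/J(\D)) \;=\; \sum_{W\subseteq[n],\,|W|=j}\dim_{\kk}\widetilde{H}_{\,j-d}\big(M|_W;\kk\big),
\]
where $M|_W$ denotes the restriction of $M$ to $W$. The key structural input is that a restriction of a matroid is again a matroid, hence Cohen--Macaulay; therefore $\widetilde{H}_{k}(M|_W;\kk)$ can be nonzero only in the single degree $k=\rho(W)-1$, where $\rho(W)$ is the rank of $W$ in $M$. Consequently a summand above is nonzero only if $j-d=\rho(W)-1$.

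The heart of the argument is to feed the dual rank formula into this constraint. Writing $r$ for the rank function of $\D$ (so $r([n])=d$), we have $\rho(W)=|W|+r([n]\setminus W)-d = j + r([n]\setminus W)-d$, and the condition $j-d=\rho(W)-1$ collapses to $r([n]\setminus W)=1$. This is exactly where simplicity enters: in a simple matroid there are no loops and no parallel elements, so a rank-one subset must be a singleton. Hence $|[n]\setminus W|=1$, i.e.\ $j=n-1$, which proves the first assertion. Note that I do not need the converse nonvanishing direction, since Cohen--Macaulayness already forces the homology into the one degree that matters.

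For the ``in particular'' I would run the same computation one homological step higher: $\beta_{d,j}(S/J(\D))=\sum_{|W|=j}\dim_{\kk}\widetilde{H}_{\,j-d-1}(M|_W;\kk)$ is nonzero only if $\rho(W)=j-d$, i.e.\ $r([n]\setminus W)=0$, which (no loops) forces $W=[n]$ and $j=n$. Truncating the minimal resolution of $S/J(\D)$ then yields the minimal resolution $0\to F_d\to F_{d-1}\to Z_{d-2}(S/J(\D))\to 0$, in which $F_{d-1}$ is generated in degree $n-1$ and $F_d$ in degree $n$. Hence $Z_{d-2}(S/J(\D))$ has an $(n-1)$-linear resolution, and a module with a linear resolution is componentwise linear (as recalled above, e.g.\ via \cite[Corollary~2.5]{CH03}). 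The individual calculations are routine; the only step that requires care is the combination of Hochster's formula with the dual rank identity, which is exactly the passage that converts the hypothesis ``$\D$ is simple'' into the numerical conclusion $j=n-1$.
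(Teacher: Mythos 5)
Your proof is correct and follows essentially the same route as the paper: Hochster's formula applied to $\Dc$, combined with the fact that restrictions of matroid complexes are again matroids and hence have reduced homology concentrated in the top dimension, with simplicity entering exactly to rule out $|W|<n-1$. The only cosmetic difference is that you extract the dimension constraint on $(\Dc)_W$ from the dual rank identity $\rho(W)=|W|-d+r([n]\setminus W)$, whereas the paper exhibits an explicit large face of $(\Dc)_W$ via a facet of $\D$ through two vertices outside $W$; you also write out the computation of $\beta_{d,j}$ and the truncation argument for the ``in particular,'' which the paper leaves implicit.
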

\begin{proof}
  Let $\Gamma = \Dc$. Hochster's formula implies:
\[
\beta_{d-1,j}(S/J(\D))=\beta_{d-1,j}(S/I_{\Gamma})=\sum_{\substack{W\subset
    [n]\\ |W|=j}} \dim_{\Bbbk} \tilde{H}_{j-d}(\Gamma_W,\Bbbk),
\]
where $\Gamma_{W}$ denotes the restriction of $\Gamma$ to the vertex
subset~$W$.
If $j>n-1$, then the only summand that could occur is $\dim_{\Bbbk}
\tilde{H}_{n-d}(\Gamma,\Bbbk) = 0$ in the case $j=n$.
If $j<n-1$, then we can find two distinct vertices outside
of~$W$. Since $\D$ is simple, they must be contained in a facet $F$ of
$\D$. Therefore, $G:=[n]\setminus F$ is a facet of $\Gamma$, and
$|G\cap ([n]\setminus W)|\leq n-j-2$.  Thus $\dim (\Gamma_{W}) \geq
j-d +1$.  By Reisner's criterion $\widetilde{H}_{j-d}(\Gamma_W,\Bbbk)
= 0$ since $\Gamma_{W}$ is a matroid and can thus have only
top-dimensional homology.
\end{proof}

\begin{example}\label{ex:caviglia-counter}
  Let $\Delta$ be the rank three simple matroid on $\{1,\dots,7\}$ with the
  following facets 
  \begin{gather*}
    123, 124, 125, 127, 135, 136, 137, 145, 146, 147, 156, 167, 234,
    235, \\ 236, 246, 247, 256, 257, 267, 345, 346, 347, 357, 367,
    456, 457, 567,
  \end{gather*}
  commonly known as the Fano matroid.
  A quick computation with Macaulay2 shows that the cover ideal is
  level of Cohen-Macaulay type $8$ while its generic initial ideal is
  not level ($\beta_{3}(S/\gin(J(\Delta)) = 10$).  Since $\Delta$ is
  simple, Proposition~\ref{prop:bettiImpliesSyzCompLin} shows that
  $Z_{1} (S/J(\D))$ is componentwise linear.
\end{example}

\begin{remark}\label{rem:lgin}
  Propositions~\ref{prop:CS} and~\ref{prop:bettiImpliesSyzCompLin}
  inspired the search for a less generic initial ideal in which the
  coordinate transform has block structure.  The hope was to find a
  construction that balances between preserving the last Betti
  number---yielding a \emph{level} quotient---and maintaining the
  existence of a binomial regular sequence---needed to have a
  \emph{monomial} quotient.  However, we did not find a definition 
  that realizes just the right balance.
\end{remark}

If the generic initial ideal of $I_{\Delta}$ is level, then $h_{\D}$
is a pure $O$-sequence since it equals the Hilbert function of the
Artinian reduction of $\gin(I_{\D})$ by variables.  To implement this
strategy we employ the following two general lemmas.  Following
\cite{HH99}, let $I_{<k}$ denote the subideal of a homogeneous ideal
$I$ generated by the homogeneous elements of $I$ of degree less
than~$k$.
\begin{lemma}\label{lem:gin}
  Let $I\subset S$ be a homogeneous ideal of projective dimension~$p$
  and regularity~$k$. If $\pd(I_{<k})<p$ and $\text{char}(\Bbbk) = 0$,
  then
  $\beta_p(I)=\beta_{p,p+k}(I)=\beta_{p,p+k}(\gin(I))=\beta_p(\gin(I))$.
\end{lemma}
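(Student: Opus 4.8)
The plan is to prove the chain by establishing two \emph{concentration} statements---that $\beta_p(I)$ and $\beta_p(\gin(I))$ are each carried entirely in internal degree $p+k$---and then linking the two middle terms through the behaviour of extremal Betti numbers under the reverse lexicographic generic initial ideal. Throughout I would use the standard facts about revlex $\gin$ in characteristic zero: $\gin(I)$ is strongly stable, $\reg(\gin(I))=\reg(I)=k$ and $\pd(\gin(I))=\pd(I)=p$ (Bayer--Stillman), $\beta_{i,j}(I)\le\beta_{i,j}(\gin(I))$ for all $i,j$, and $\gin$ is monotone with respect to inclusion (a common generic coordinate change computes both gins, and initial ideals respect inclusion). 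For a strongly stable ideal $J$ the Eliahou--Kervaire formula gives $\beta_{i,i+d}(J)=\sum_{u}\binom{m(u)-1}{i}$, summed over the minimal generators $u$ of degree $d$, where $m(u)$ is the largest index of a variable dividing $u$; in particular $\pd(J)=\max_u(m(u)-1)$, $\reg(J)$ is the top degree of a minimal generator, and $\beta_p(J)$ counts the generators with $m(u)=p+1$.

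First I would prove the concentration for $I$. Regularity $k$ forces $I$ to be generated in degrees $\le k$, so $N:=I/I_{<k}$ is generated in degrees $\ge k$, whence $\beta_{p,j}(N)=0$ for $j<p+k$ by minimality. Applying $\Tor(-,\kk)$ to $0\to I_{<k}\to I\to N\to 0$ and using $\pd(I_{<k})<p$ (so $\Tor_p(I_{<k},\kk)=0$) gives an injection $\Tor_p(I,\kk)_j\hookrightarrow\Tor_p(N,\kk)_j$, hence $\beta_{p,j}(I)=0$ for $j<p+k$. Since regularity already kills $\beta_{p,j}(I)$ for $j>p+k$, we obtain $\beta_p(I)=\beta_{p,p+k}(I)$, and this is nonzero because $\pd(I)=p$.

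Next I would handle the two remaining equalities. The nonvanishing just obtained makes $\beta_{p,p+k}(I)$ an extremal Betti number (every $\beta_{p',p'+k'}(I)$ with $p'\ge p$, $k'\ge k$, $(p',k')\ne(p,k)$ vanishes by $\pd(I)=p$ and $\reg(I)=k$), and extremal Betti numbers are preserved by the revlex $\gin$ (Bayer--Charalambous--Popescu), giving $\beta_{p,p+k}(I)=\beta_{p,p+k}(\gin(I))$. For the concentration on the monomial side, I would transport the hypothesis: since $(I_{<k})_d=I_d$ for $d<k$ and $\gin$ preserves Hilbert functions and inclusions, $\gin(I_{<k})\subseteq\gin(I)$ with equality in all degrees $d<k$, so the two strongly stable ideals share the same minimal generators of degree $<k$. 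By $\pd(\gin(I_{<k}))=\pd(I_{<k})<p$ and Eliahou--Kervaire, every minimal generator of $\gin(I_{<k})$---in particular every generator of $\gin(I)$ of degree $<k$---satisfies $m(u)\le p$. Hence no generator of $\gin(I)$ with $m(u)=p+1$ has degree $<k$; combined with $\reg(\gin(I))=k$ this forces such generators into degree exactly $k$, so Eliahou--Kervaire yields $\beta_p(\gin(I))=\beta_{p,p+k}(\gin(I))$. Concatenating the three steps gives the stated chain.

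The main obstacle is the transfer in the last step: the hypothesis concerns $I_{<k}$, and one must argue carefully that the coincidence of $\gin(I)$ and $\gin(I_{<k})$ in low degrees (via monotonicity of $\gin$ under inclusion and preservation of projective dimension) correctly propagates the bound $m(u)\le p$ to the low-degree generators of $\gin(I)$. This cannot be bypassed by the Tor argument of the first step applied to $\gin(I)$, nor deduced from the concentration for $I$, because passing to the gin only \emph{increases} the intermediate Betti numbers $\beta_{p,j}$ with $j<p+k$; the inequality points the wrong way, so an independent monomial argument is genuinely needed.
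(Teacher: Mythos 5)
Your proof is correct, and its core is exactly the paper's argument: identify the degree-$<k$ generators of $\gin(I)$ with those of $\gin(I_{<k})$ (the paper phrases this as $\gin(I)_{<k}=\gin(I_{<k})_{<k}$), import the hypothesis $\pd(I_{<k})<p$ through Bayer--Stillman, read off from Eliahou--Kervaire that every minimal generator $u$ of $\gin(I)$ with $m(u)=p+1$ must then have degree exactly $k=\reg(\gin(I))$, conclude $\beta_p(\gin(I))=\beta_{p,p+k}(\gin(I))$, and transfer the corner value back to $I$ via Bayer--Charalambous--Popescu on extremal Betti numbers. The only place you diverge is the concentration $\beta_p(I)=\beta_{p,p+k}(I)$: you prove it up front and independently, via the $\Tor$ long exact sequence of $0\to I_{<k}\to I\to I/I_{<k}\to 0$ together with $\Tor_p(I_{<k},\kk)=0$ and the fact that $I/I_{<k}$ is generated in degrees $\ge k$; this argument is valid (and has the small virtue of being characteristic-free and of giving $\beta_{p,p+k}(I)\neq 0$ directly, so that extremality is immediate). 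The paper instead harvests this equality at the very end: once $\beta_{p,p+j}(\gin(I))=0$ for $j\neq k$, the termwise inequality $\beta_{p,p+j}(I)\le\beta_{p,p+j}(\gin(I))$ forces $\beta_{p,p+j}(I)=0$ for $j\neq k$ as well. So your first step is logically redundant given your third: the inequality points the wrong way only for the transfer from $I$ to $\gin(I)$ (which is why, as you rightly insist, the monomial argument cannot be avoided), but it points the right way for deducing the concentration of $\beta_p(I)$ from that of $\beta_p(\gin(I))$.
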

\begin{proof}
  Let $J_1=\gin(I)_{<k}$ and $J_2=\gin(I_{<k})_{<k}$.  It is easy to
  see that $J_1=J_2$.  In characteristic zero, the generic initial
  ideal is strongly stable and~\cite[Theorem 2.4(a)]{BS} shows
  $\pd(J_2)<p$. Using the Eliahou-Kervaire
  resolution~\cite[Theorem 2.1]{EK}, we get that no monomial
  $x_{p+1}u$ with $u\in\Bbbk[x_1,\ldots,x_{p+1}]$ is a minimal
  generator of~$J_2=J_1$.  Therefore any minimal generator of
  $\gin(I)$ of the form $x_{p+1}u$ with
  $u\in\Bbbk[x_1,\ldots,x_{p+1}]$ must be of degree at least~$k$.
  Since by \cite[Theorem~2.4(b)]{BS} we have $\reg(I)=\reg(\gin(I))$
  it must be of degree exactly~$k$.

  The Eliahou-Kervaire formula \cite[Corollary~7.2.3]{HH11} gives one
  of the equations: $\beta_p(\gin(I))=\beta_{p,p+k}(\gin(I))$.  Since
  $\beta_{p,p+k}(I)$ is an extremal Betti number, we have
  $\beta_{p,p+k}(\gin(I))=\beta_{p,p+k}(I)$
  by~\cite[Corollary~1.3]{BCP}.  Finally, it is a general fact (see
  for example \cite[Theorem~8.29]{MS05}) that $\beta_{p,p+j}(I)\leq
  \beta_{p,p+j}(\gin(I))$ for any~$j$, so actually
  $\beta_p(I)=\beta_{p,p+k}(I)=\beta_{p,p+k}(\gin(I))=\beta_p(\gin(I))$.
\end{proof}

\begin{lemma}\label{lem:cmaddingfacets}
  Let $\D$ be a Cohen-Macaulay complex of dimension $d$, and $F$ a
  minimal non-face of cardinality $d+1$. Then $\D\cup F$ is
  Cohen-Macaulay.
\end{lemma}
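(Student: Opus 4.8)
The cleanest route I would take is algebraic: realize $\kk[\Delta\cup F]$ as the middle term of a short exact sequence whose outer terms are visibly Cohen-Macaulay of the right dimension, and then invoke the depth lemma. Write $\Delta':=\Delta\cup F$ and note first that $\dim\Delta'=d$, so $\dim\kk[\Delta']=d+1=\dim\kk[\Delta]$. Since $F$ is a minimal non-face of $\Delta$, the monomial $x^F:=\prod_{i\in F}x_i$ is a minimal generator of $I_\Delta$, and adding $F$ removes exactly this generator in the sense that $I_\Delta=I_{\Delta'}+(x^F)$; indeed the only monomials lying in $I_\Delta$ but not in $I_{\Delta'}$ are those supported exactly on $F$. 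This gives a surjection $\kk[\Delta']\twoheadrightarrow\kk[\Delta]$ with kernel $I_\Delta/I_{\Delta'}\cong x^F\,\kk[\Delta']$.

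The key computation is to identify this kernel. Here the hypotheses on $F$ are used twice. Because $F$ is a facet of $\Delta'$, the only face of $\Delta'$ containing $F$ is $F$ itself, so $x^a x^F\in I_{\Delta'}$ precisely when $\operatorname{supp}(a)\not\subseteq F$; hence $\operatorname{ann}_{\kk[\Delta']}(x^F)=(x_j:j\notin F)$. Because every proper subset of $F$ is a face of $\Delta$, the restriction of $\Delta'$ to $F$ is the full simplex on $F$. Together these yield
\[
x^F\,\kk[\Delta'] \;\cong\; \big(\kk[\Delta']/(x_j:j\notin F)\big)(-(d+1)) \;\cong\; \kk[x_i:i\in F]\,(-(d+1)),
\]
a shifted polynomial ring in $d+1$ variables, hence Cohen-Macaulay of depth $d+1$.

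It then remains to feed the short exact sequence
\[
0 \longrightarrow \kk[x_i:i\in F]\,(-(d+1)) \longrightarrow \kk[\Delta'] \longrightarrow \kk[\Delta] \longrightarrow 0
\]
into the depth lemma. Both outer modules have depth $d+1$: the quotient $\kk[\Delta]$ by Cohen-Macaulayness of $\Delta$, and the submodule as a (shifted) polynomial ring. Therefore $\depth\kk[\Delta']\ge\min(d+1,d+1)=d+1$, and since $\depth\kk[\Delta']\le\dim\kk[\Delta']=d+1$ this forces equality, so $\kk[\Delta']$ is Cohen-Macaulay.

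The routine parts are the exact sequence and the depth lemma; the step that genuinely uses the hypothesis—and which I expect to require the most care—is the identification of the kernel as a polynomial ring, i.e.\ the computation of $\operatorname{ann}_{\kk[\Delta']}(x^F)$ together with the fact that the restriction of $\Delta'$ to $F$ is a full simplex. (Alternatively one can argue topologically via Reisner's criterion: it suffices to check that $\tilde H_i(\lk_{\Delta'}(\sigma);\kk)=0$ below the top dimension for every face $\sigma$. The links are unchanged for $\sigma\not\subseteq F$, trivial for $\sigma=F$, and of the form $\lk_\Delta(\sigma)\cup\{F\setminus\sigma\}$ for $\sigma\subsetneq F$; in the last case a Mayer--Vietoris sequence for the decomposition $\lk_\Delta(\sigma)\cup\overline{F\setminus\sigma}$, glued along the sphere $\partial(F\setminus\sigma)$, delivers the required vanishing.)
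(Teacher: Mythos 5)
Your proof is correct, but it runs along a different short exact sequence than the paper's. The paper uses the Mayer--Vietoris sequence
\[
0\rightarrow \kk[\D\cup F]\rightarrow \kk[\D]\oplus \kk[\langle F\rangle]\rightarrow \kk[\langle F\rangle\cap\D]\rightarrow 0,
\]
observing that minimality of the non-face $F$ makes $\langle F\rangle\cap\D$ the boundary of a $d$-simplex, hence Cohen--Macaulay of Krull dimension $d$; the depth lemma for the leftmost term ($\depth A\ge\min\{\depth B,\depth C+1\}$) then yields $\depth\kk[\D\cup F]\ge d+1$. You instead place $\kk[\D\cup F]$ in the middle of $0\to x^F\kk[\D\cup F]\to\kk[\D\cup F]\to\kk[\D]\to 0$ and identify the kernel as a shifted polynomial ring in $d+1$ variables via the annihilator computation $\operatorname{ann}(x^F)=(x_j:j\notin F)$. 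The two decompositions are essentially equivalent---your kernel is precisely the kernel of the restriction map $\kk[\langle F\rangle]\to\kk[\langle F\rangle\cap\D]$ hidden inside the paper's sequence---but the work sits in different places: the paper gets away with citing the Cohen--Macaulayness of a sphere and needs the ``$+1$'' in the depth lemma, whereas you do a small explicit module computation and then only need the weaker middle-term inequality $\depth B\ge\min\{\depth A,\depth C\}$. Both arguments use the hypothesis $|F|=d+1$ in the same essential way (the kernel, respectively the intersection, must have depth $d+1$, respectively $d$, for the minimum to come out right). Your parenthetical Reisner-criterion variant is also sound, though it amounts to a topological repackaging of the paper's Mayer--Vietoris decomposition applied link by link.
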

\begin{proof}
  Let $\<F\>$ denote the complex on $[n]$ with one facet~$F$.  By
  construction $\langle F\rangle\cap \D$ is the boundary of a
  $d$-simplex. In particular $\Bbbk[\langle F\rangle\cap\D]$ is a
  $d$-dimensional Cohen-Macaulay ring.  So the statement follows at
  once from the exact sequence
\[
0\rightarrow \Bbbk[\D\cup F]\rightarrow \Bbbk[\D]\oplus \Bbbk[\langle
F\rangle]\rightarrow \Bbbk[\langle F\rangle\cap\D]\rightarrow 0,
\]
and the depth inequalities.
\end{proof}

The following theorem is the main result of this section.  We state it
for Stanley-Reisner ideals.

\begin{theorem}\label{thm:skeleton}
  Let $\D$ be the $(d-1)$-skeleton of a $d$-dimensional Cohen-Macaulay
  complex. Then $h_{\Delta}$ is a pure $O$-sequence.  Furthermore, if
  $\text{char}(\kk) = 0$, then $\kk[\D]$ is level.
\end{theorem}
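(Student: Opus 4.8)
The plan is to verify the two hypotheses of Lemma~\ref{lem:gin} for the ideal $I=I_{\D}$ and then read off both conclusions directly from its statement. Write $\G$ for the $d$-dimensional Cohen--Macaulay complex of which $\D$ is the $(d-1)$-skeleton. Since the $h$-vector is a combinatorial invariant and the notion of a pure $O$-sequence is characteristic-free, I may assume $\text{char}(\kk)=0$ throughout: the pure $O$-sequence statement for an arbitrary field then follows because $h_{\D}$ does not depend on $\kk$. First I would record two numerical invariants. Being a skeleton of a Cohen--Macaulay complex, $\D$ is itself Cohen--Macaulay of dimension $d-1$, so $\kk[\D]$ has Krull dimension $d$ and $\pd(I_{\D})=n-d-1=:p$. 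As $\kk[\D]$ is Cohen--Macaulay, $\reg(\kk[\D])$ equals the degree of its $h$-polynomial, which for a rank-$d$ complex is supported in degrees $0,\dots,d$; hence $\reg(I_{\D})=\reg(\kk[\D])+1\le d+1$. Conversely every facet of $\G$ is a minimal non-face of $\D$ of cardinality $d+1$, and therefore a degree-$(d+1)$ minimal generator of $I_{\D}$, so $\reg(I_{\D})=d+1=:k$.

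The heart of the argument is to show $\pd\big((I_{\D})_{<k}\big)<p$, the one nontrivial hypothesis of Lemma~\ref{lem:gin}. The minimal generators of $I_{\D}$ of degree $<k=d+1$ are precisely the minimal non-faces of $\G$ of cardinality at most $d$, so $(I_{\D})_{<k}=I_{\Gamma'}$, where $\Gamma'$ is the complex whose minimal non-faces are exactly these sets. By construction $\Gamma'$ agrees with $\G$ on all faces of cardinality at most $d$, and its $d$-skeleton $\Gamma''$ is obtained from $\G$ by filling in, one at a time, every $(d+1)$-set all of whose facets lie in $\G$, that is, every minimal non-face of $\G$ of cardinality $d+1$. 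Each such step adjoins a minimal non-face of cardinality $d+1$ to a Cohen--Macaulay complex of dimension $d$, so Lemma~\ref{lem:cmaddingfacets} applies at every step and $\Gamma''$ is Cohen--Macaulay of dimension $d$. Since $\Gamma''$ is the $d$-skeleton of $\Gamma'$, the standard fact that $\depth\kk[\Gamma']$ exceeds by one the largest $i$ for which the $i$-skeleton of $\Gamma'$ is Cohen--Macaulay yields $\depth\kk[\Gamma']\ge d+1$, hence $\pd(I_{\Gamma'})\le n-d-2<p$, as required.

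With both hypotheses established, Lemma~\ref{lem:gin} gives
\[
\beta_p(I_{\D})=\beta_{p,p+k}(I_{\D})=\beta_{p,p+k}(\gin(I_{\D}))=\beta_p(\gin(I_{\D})).
\]
The equality $\beta_p(I_{\D})=\beta_{p,p+k}(I_{\D})$ says that the last free module in the minimal graded free resolution of $\kk[\D]$ is concentrated in a single degree, i.e.\ $\kk[\D]$ is level, which proves the ``furthermore'' part. The equality $\beta_p(\gin(I_{\D}))=\beta_{p,p+k}(\gin(I_{\D}))$ says the same for $S/\gin(I_{\D})$. Because the reverse lexicographic generic initial ideal preserves depth, $S/\gin(I_{\D})$ is Cohen--Macaulay; being strongly stable in characteristic zero it admits a regular sequence of variables, and the resulting Artinian reduction is a monomial level algebra whose Hilbert function is $h_{\D}$. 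Therefore $h_{\D}$ is a pure $O$-sequence, and by characteristic-independence this holds over every field.

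I expect the main obstacle to be the second paragraph: identifying $(I_{\D})_{<k}$ with the Stanley--Reisner ideal of an explicit complex $\Gamma'$ and certifying that its projective dimension drops below $p$. This is exactly where Lemma~\ref{lem:cmaddingfacets} is used to manufacture a Cohen--Macaulay $d$-skeleton out of $\G$, and where one must invoke the relation between $\depth\kk[\Gamma']$ and the Cohen--Macaulayness of the skeleta of $\Gamma'$. Pinning down $\reg(I_{\D})=d+1$, so that $I_{<k}$ captures precisely the generators below the top degree, is the other point requiring care.
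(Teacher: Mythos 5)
Your proof is correct and follows essentially the same route as the paper: compute $\reg(I_{\D})=d+1$, identify $(I_{\D})_{<d+1}$ with a Stanley--Reisner ideal whose $d$-skeleton is obtained from the ambient Cohen--Macaulay complex by adjoining its minimal non-faces of size $d+1$, apply Lemma~\ref{lem:cmaddingfacets} to certify Cohen--Macaulayness and hence the depth bound needed for Lemma~\ref{lem:gin}, and then pass to the Artinian reduction of the strongly stable, level $\gin$ to produce the pure $O$-sequence. The only difference is that you spell out several steps the paper leaves implicit (the skeleton--depth criterion, the Bayer--Stillman depth preservation, and the characteristic-independence reduction), all of which are handled correctly.
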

\begin{proof}
  By Hochster's formula $\reg(\Bbbk[\D]) \leq d$ and since
  $I_{\Delta}$ has a generator of degree $d+1$, \mbox{$\reg(\Bbbk[\D])
    = d$}.  Write $J=(I_{\D})_{< d+1}$ and let $\Gamma$ be the
  corresponding simplicial complex.  The
  result follows from Lemma~\ref{lem:gin} once we show
  $\depth(\Bbbk[\Gamma])>d$, which, in turn, is equivalent to the
  $d$-skeleton of $\Gamma$ being Cohen-Macaulay.  The $d$-skeleton of
  $\Gamma$ is the complex $\Gamma_{d}$ that arises from $\Delta$ by
  turning all non-faces of size $d+1$ into facets.  Now, $\D$ is the
  $(d-1)$-skeleton of a $d$-dimensional Cohen-Macaulay complex
  $\Omega$.  There are two kinds of facets of $\Gamma_{d}$: those that
  are facets of $\Omega$ and those that are not.  Those that are not,
  are minimal non-faces in $\Omega$.  By
  Lemma~\ref{lem:cmaddingfacets}, $\Gamma_{d}$ is Cohen-Macaulay.  The
  statement about the $h$-vector is characteristic-free because the
  $h$-vector of a simplicial complex does not depend on the
  coefficient field.
\end{proof}

It is equivalent to say that a vector is the Hilbert function of an
Artinian monomial algebra and that it is the $f$-vector of an
\emph{order ideal of monomials}, also known as a \emph{multicomplex}.
In this language pure $O$-sequences are $f$-vectors of pure
multicomplexes.  Similar to simplicial complexes, there are theories
of shellability of multicomplexes (such as M-shellability) and the work of Chari suggests that
a characterization of $f$-vectors of shellable multicomplexes may be
possible~\cite{CH97}.  He also conjectures that the $h$-vector of any
coloop-free matroid is a \emph{shellable
  $O$-sequence}~\cite[Conjecture~3]{CH97} which would imply Stanley's
conjecture.
\begin{remark}\label{rem:shellable}
  Let $I\subset S = \kk[x_{1},\dots,x_{r}]$ be a strongly stable ideal
  such that $S/I$ is an Artinian level ring. In this case the
  $h$-vector of $S/I$ is the $f$-vector of an M-shellable
  multicomplex.
\end{remark}
\begin{proof}
  By the Eliahou-Kervaire resolution, the variable $x_r$ appears only
  in the minimal generators of $I$ of maximal degree.  Let $k$ be this
  maximal degree, and let $u_1,\ldots ,u_t$ be the degree $k$ minimal
  generators of $I$ divisible by $x_r$.  Write $u_i=v_ix_r$ for all
  $i=1,\ldots ,t$.  One easily checks that $v_1,\ldots ,v_t$ generate
  the order ideal of $S/I$.  Let $\prec$ be the graded revlex order
  induced by $x_r>\ldots >x_1$.  We can assume $v_1\prec\ldots \prec
  v_t$.  Now write $v_i=v_i'x_r^{e_i}$, where $e_i$ is the maximum
  power of $x_r$ dividing $v_i$, and let $V_i=\{v_i'x_r^j:j=0,\ldots
  e_i\}$.  We claim that $V_t,\ldots ,V_1$ is a shelling of the
  multicomplex $S/I$.  It remains to show that, if $u$ is a monomial
  of degree $e$ dividing $v_i$, then there exists $j\geq i$ such that
  $v_j=ux_r^{k-e-1}$.  Let $m$ be the monomial of degree $k-e-1$ such
  that $v_i=um$.  If no such $j$ existed, then $ux_r^{k-e-1}$ would be
  in $I$, so there would exist a minimal generator $u'$ of $I$, say of
  degree $a$, such that $u=u'u''$ for some~$u''$.  Then $u'x_r^{e-a}$
  would be in $I$ as well.  Since $I$ is strongly stable,
  $u=u'x_r^{e-a}/x_r^{e-a}\cdot u''\in I$.  This is a contradiction to
  $u_i$ being a minimal generator.
\end{proof}

In matroid theory, passing from a matroid of rank $d$ to its
$k$-skeleton for $k<d-1$ is called a \emph{truncation}.  The rank
function of the truncation is $A \mapsto \min\{\rk(A),k+1\}$.  The
shift of one arises because the $k$-skeleton is of dimension~$k$ which
means rank~$k+1$.  All together we have

\begin{theorem}\label{thm:StanleyForTruncation}
  Any truncation of a matroid satisfies Chari's conjecture and
  consequently also Stanley's conjecture.
\end{theorem}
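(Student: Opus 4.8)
The plan is to reduce to Theorem~\ref{thm:skeleton} by recognizing every proper truncation of a matroid as the top-codimension skeleton of a Cohen-Macaulay complex, and then to upgrade the pure $O$-sequence conclusion to the \emph{shellable} $O$-sequence demanded by Chari's conjecture, using the strong stability of the generic initial ideal together with Remark~\ref{rem:shellable}. So let $M$ be a matroid of rank $d$ and let $\Delta = M^{(k)}$ be its $k$-skeleton, i.e.\ the truncation to rank $k+1$, with $k<d-1$. Since truncations of matroids are again matroids, the $(k+1)$-skeleton $\Omega := M^{(k+1)}$ is a matroid, hence Cohen-Macaulay, and it has dimension $k+1$. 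As $\Delta$ is exactly the $((k+1)-1)$-skeleton of $\Omega$, Theorem~\ref{thm:skeleton} applies with its ``$d$'' equal to $k+1$, giving at once that $h_\Delta$ is a pure $O$-sequence and, in characteristic zero, that $\kk[\Delta]$ is level. No iteration is needed: choosing $\Omega=M^{(k+1)}$ already exhibits $\Delta$ as a codimension-one skeleton of a Cohen-Macaulay complex.

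Next I would promote this to Chari's conjecture. Working in characteristic zero, $\gin(I_\Delta)$ is strongly stable, and the essential point is that $S/\gin(I_\Delta)$ is itself \emph{level}. This does not follow from levelness of $\kk[\Delta]$ alone, but it does follow from the mechanism behind Theorem~\ref{thm:skeleton}: its proof runs through Lemma~\ref{lem:gin}, which shows $\beta_p(\gin(I_\Delta))=\beta_{p,p+k}(\gin(I_\Delta))$, so the top Betti number of $\gin(I_\Delta)$ is concentrated in a single degree. Since the reverse lexicographic gin preserves depth, $S/\gin(I_\Delta)$ is again Cohen-Macaulay of the same dimension, so $p=\pd(\gin(I_\Delta))$ and the single-degree concentration of $\beta_p$ is exactly levelness. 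Reducing $\gin(I_\Delta)$ modulo the regular sequence of variables $x_n,\dots,x_{d+1}$ (available because the ideal is strongly stable) yields an Artinian, strongly stable, level monomial ideal with the same $h$-vector as $\kk[\Delta]$, and Remark~\ref{rem:shellable} then identifies $h_\Delta$ as the $f$-vector of an M-shellable multicomplex, i.e.\ a shellable $O$-sequence. To close, I would note that a proper truncation is automatically coloop-free---if $e$ were a coloop of the rank-$(k+1)$ truncation then, since $\rk(M\setminus e)\geq d-1\geq k+1$, an independent set of size $k+1$ avoiding $e$ exists, a contradiction---so the coloop-free hypothesis of Chari's conjecture is satisfied; and a shellable $O$-sequence is in particular a pure $O$-sequence, which gives Stanley's conjecture.

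The main obstacle is precisely the levelness of $\gin(I_\Delta)$, because levelness is \emph{not} preserved by the generic initial ideal in general (Example~\ref{ex:caviglia-counter}). The feature that rescues the argument for skeleta is the rigidity provided by Lemma~\ref{lem:gin}: a single extremal Betti number controls the top of the resolutions of both $I_\Delta$ and $\gin(I_\Delta)$, so the crux of the write-up is to extract levelness of the gin from that lemma (via depth preservation) rather than from levelness of $\kk[\Delta]$. Everything else---closure of matroids under truncation, the reduction to an Artinian monomial quotient, and the appeal to Remark~\ref{rem:shellable}---is routine assembly.
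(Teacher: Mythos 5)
Your proof is correct and follows essentially the same route as the paper: reduce to Theorem~\ref{thm:skeleton} by viewing the truncation as the codimension-one skeleton of the next truncation up, extract levelness of the strongly stable ideal $\gin(I_\Delta)$ from Lemma~\ref{lem:gin}, pass to the Artinian reduction by variables, and invoke Remark~\ref{rem:shellable}. The only differences are that you spell out steps the paper leaves implicit (the choice $\Omega=M^{(k+1)}$, the depth-preservation argument for levelness of the gin, and the coloop-freeness of a proper truncation), all of which are accurate.
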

\begin{proof}
  If $I\subset S$ is a strongly stable ideal such that $S/I$ is level,
  then the $h$-vector of $S/I$ is the $f$-vector of an M-shellable
  multicomplex by Remark~\ref{rem:shellable}.  By
  Theorem~\ref{thm:skeleton}, the $h$-vectors of truncated matroids
  satisfy Chari's and consequently also Stanley's conjecture.
\end{proof}

Evidently the next question is: Which matroids are truncations?
Certainly not all of them.
\begin{example}
  Any complete bipartite graph is a rank two matroid that is not the
  truncation of a matroid.  More generally, any matroid that becomes a
  simplex after identifying parallel elements is not a truncation.
\end{example}

\begin{remark}
  If a rank $d$ matroid $\Gamma$ is a truncation, then it is a
  truncation of a rank $d+1$ matroid $\Delta$.  In this case, any
  facet of $\Delta$ is a \emph{spanning circuit} of $\Gamma$, that is,
  a minimal non-face of size~$d+1$.  In particular, the facets of
  $\Delta$ are contained in the spanning circuits of~$\Gamma$.
  Moreover, if $\Gamma$ has no spanning circuit, then it is not the
  truncation of a matroid.
\end{remark}

\begin{example}
  The dual of the Fano matroid from Example~\ref{ex:caviglia-counter}
  has no spanning circuit.
\end{example}

\begin{remark} Let $\Delta$ be a matroid which has a spanning circuit.
  In {\cite{brylawski86:_const}} Brylawski gives an algorithm that
  decides if there exists a matroid $\Gamma$ such that $\Delta$ is the
  truncation of $\Gamma$, and constructs the \emph{freest} such
  matroid whenever possible.
\end{remark}

In the remainder of the section we discuss Schubert matroids (also
known as shifted matroids, PI-matroids, and generalized Catalan
matroids~\cite{fink2010matroid}).  They play an important role in the
study of Hopf
algebras of (poly)matroids~\cite{derksen2010valuative}.

\begin{definition}
  Let $1\leq s_1<s_2<\ldots <s_d\leq n$ be a sequence of strictly
  ascending integers.  The \emph{Schubert matroid} $SM_n(s_1,\ldots,
  s_d)$ is the rank $d$ matroid on $[n]$ with facets:
  \begin{equation}\label{eq:schubertCondition}
    \left\lbrace \{i_1,\ldots ,i_d\} \ \ : \ \ i_j\leq s_j
    \right\rbrace.
  \end{equation}
\end{definition}

\begin{remark}\label{rem:truncation}
  For any simplicial complex $\Delta$, the ideal $(I_{\D})_{\<k\>}$,
  generated by the degree $k$ part of $I_{\D}$ is generated by all
  monomials corresponding to non-faces of size~$k$.
\end{remark}

\begin{lemma}\label{lem:truncate-schubert}
  If $\Delta = SM_{n}(s_{1},\dots,s_{d})$ is a Schubert matroid of
  rank $d$ and $s_{1}\geq 2$, then for any $k<d+1$, $(I_{\D})_{\<k\>}$
  is the ideal generated by the degree $k$ part of the Stanley-Reisner
  ideal of $SM_{n}(s_{1}-1,s_{1},\dots,s_{d})$.
\end{lemma}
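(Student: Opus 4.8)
The plan is to recognize $\Delta = SM_n(s_1,\dots,s_d)$ as the \emph{truncation} of $\Delta' := SM_n(s_1-1,s_1,s_2,\dots,s_d)$, equivalently as the $(d-1)$-skeleton of the rank $d+1$ matroid $\Delta'$, and then to transport this combinatorial identity to the level of ideals in each fixed degree $k<d+1$. The hypothesis $s_1\geq 2$ is exactly what guarantees that $(s_1-1,s_1,s_2,\dots,s_d)$ is a strictly increasing sequence of integers with first entry $\geq 1$, so that $\Delta'$ is a genuine Schubert matroid of rank $d+1$ on $[n]$.

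First I would record a combinatorial description of the faces of a Schubert matroid. Realizing $SM_n(u_1,\dots,u_r)$ as the transversal matroid of the nested family $[u_1]\subseteq[u_2]\subseteq\dots\subseteq[u_r]$, where $[u]=\{1,\dots,u\}$ (its size $r$ transversals $\{b_1<\dots<b_r\}$ with $b_j\leq u_j$ are precisely the facets, so the two descriptions agree), a set $G=\{a_1<a_2<\dots<a_k\}$ with $k\leq r$ is a face if and only if it is a partial transversal, i.e. if and only if $a_j\leq u_{r-k+j}$ for every $j\in\{1,\dots,k\}$. Since the defining sets are nested, the optimal matching of $G$ uses the $k$ largest sets $[u_{r-k+1}],\dots,[u_r]$ matched in increasing order, which yields exactly this system of inequalities by Hall's criterion. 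This is the only genuinely matroid-theoretic step, and I expect it to be the main (though standard) technical point.

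The heart of the lemma is then a short index computation. Fix $k\leq d$ and a set $G=\{a_1<\dots<a_k\}$. Applying the criterion with $r=d$ gives $G\in\Delta$ if and only if $a_j\leq s_{d-k+j}$ for all $j$. Applying it with $r=d+1$ to the shifted vector $(s_1-1,s_1,s_2,\dots,s_d)$, whose entries I write as $s'_1=s_1-1$ and $s'_i=s_{i-1}$ for $i\geq 2$, gives $G\in\Delta'$ if and only if $a_j\leq s'_{(d+1)-k+j}$ for all $j$. Because $k\leq d$, the indices $(d+1)-k+j$ range over $\{d-k+2,\dots,d+1\}$, all of which are $\geq 2$; hence $s'_{(d+1)-k+j}=s_{d-k+j}$ and the two conditions coincide. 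The shifted-in entry $s_1-1$ occupies position $1$ and is never consulted when $k\leq d$, which is precisely why the restriction $k<d+1$ is necessary. Consequently $\Delta$ and $\Delta'$ have exactly the same faces, hence the same non-faces, in every size $\leq d$.

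Finally I would pass to the ideals. For $k<d+1$ every monomial of degree $k$ has support of size $\leq k\leq d$, and a monomial lies in a Stanley-Reisner ideal exactly when its support is a non-face; therefore the previous paragraph shows $(I_\Delta)_k=(I_{\Delta'})_k$ as subspaces of $S_k$. Equality of the degree $k$ graded pieces immediately gives equality of the ideals they generate, that is $(I_\Delta)_{\langle k\rangle}=(I_{\Delta'})_{\langle k\rangle}$, which is the assertion. Arguing with graded pieces rather than with squarefree generators sidesteps the fact that $I_{\Delta'}$ may contain non-squarefree monomials of degree $k$ supported on smaller non-faces; these are automatically shared by both ideals, so no separate bookkeeping is required.
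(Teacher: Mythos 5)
Your proof is correct, and it reaches the same combinatorial core as the paper's --- that $SM_n(s_1,\dots,s_d)$ and $SM_n(s_1-1,s_1,\dots,s_d)$ have identical faces in every cardinality $\le d$ --- but organizes the argument differently. The paper works only with size-$(d+1)$ sets: it checks from the defining inequalities that each facet of $SM_n(s_1-1,s_1,\dots,s_d)$ is a minimal non-face of $SM_n(s_1,\dots,s_d)$, and that each size-$(d+1)$ non-face of the former yields, after deleting its smallest element, a size-$d$ non-face of the latter; it then concludes via Remark~\ref{rem:truncation}. You instead extract from the nested transversal presentation the uniform face criterion $a_j\le s_{d-k+j}$ for a $k$-subset $\{a_1<\dots<a_k\}$, after which the lemma is a one-line index shift in which the roles of the hypothesis $s_1\ge 2$ (strict monotonicity of the new sequence) and of the restriction $k<d+1$ (the entry $s_1-1$ is never consulted) become transparent. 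Your final step is also slightly more careful than the paper's: comparing the full degree-$k$ graded pieces $(I_\Delta)_k=(I_{\Delta'})_k$, rather than only the squarefree monomials of non-faces of size exactly $k$, correctly accounts for the non-squarefree elements of $(I_\Delta)_{\langle k\rangle}$ and avoids the small imprecision in the phrasing of Remark~\ref{rem:truncation}. The trade-off is that you must justify the partial-transversal characterization of faces (Hall's criterion for a nested family), a standard but nontrivial ingredient that the paper's direct inequality-chasing does not need.
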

\begin{proof}
  If $\{j_{1},\dots,j_{d+1}\}$ is a facet of
  $SM_{n}(s_{1}-1,s_{1},\dots,s_{d})$ then it is a minimal non-face of
  $SM_{n}(s_{1},\dots,s_{d})$ since any 
  $\{j_{1},\dots,\widehat{j_{l}},\dots,j_{d+1}\}$
  satisfies~\eqref{eq:schubertCondition}.
  On the other hand, if $\{j_{1},\dots,j_{d+1}\}$ is a non-face of
  $SM_{n}(s_{1}-1,s_{1},\dots,s_{d})$, then $\{j_{2},\dots,j_{d+1}\}$ is
  a non-face of $SM_{n}(s_{1},\dots,s_{d})$, assuming without loss of
  generality that $j_1<j_2<\ldots <j_{d+1}$.
  By Remark~\ref{rem:truncation} the statement holds for any $k<d+1$.
\end{proof}

\begin{theorem}\label{thm:schubert}
  Schubert matroids have componentwise linear Stanley-Reisner ideals
  and in particular satisfy Chari's (and thus Stanley's) conjecture.
\end{theorem}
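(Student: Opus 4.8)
The plan is to prove the stronger assertion that $I_{\Delta}$ is componentwise linear and then feed this into the same level-plus-gin machinery used for Theorem~\ref{thm:StanleyForTruncation} to obtain Chari's conjecture. Recall that componentwise linearity means that $(I_{\Delta})_{\langle k\rangle}$ has a $k$-linear resolution for every $k$, so the core of the argument is to verify this for each $k$. I would do so by induction on the corank $n-d$ of $\Delta = SM_{n}(s_{1},\dots,s_{d})$, the base case $n-d=0$ being the simplex where $I_{\Delta}=0$.

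For the inductive step, note first that componentwise linearity is unaffected by deleting coloops, since this only removes a polynomial variable that does not occur in $I_{\Delta}$; hence we may assume $s_{1}\geq 2$. Now split on the degree $k$. For $k\geq d+1$ every $k$-subset of $[n]$ is dependent, so by Remark~\ref{rem:truncation} the ideal $(I_{\Delta})_{\langle k\rangle}$ is the squarefree Veronese ideal generated by all squarefree monomials of degree $k$, which is well known to have a $k$-linear resolution. For $k<d+1$ I invoke Lemma~\ref{lem:truncate-schubert}, which identifies $(I_{\Delta})_{\langle k\rangle}$ with $(I_{\Delta'})_{\langle k\rangle}$ for the Schubert matroid $\Delta' = SM_{n}(s_{1}-1,s_{1},\dots,s_{d})$ of rank $d+1$ and corank $(n-d)-1$. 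By the induction hypothesis $I_{\Delta'}$ is componentwise linear, so $(I_{\Delta'})_{\langle k\rangle}$, and therefore $(I_{\Delta})_{\langle k\rangle}$, has a $k$-linear resolution. These two ranges exhaust all $k$, proving that $I_{\Delta}$ is componentwise linear.

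To deduce Chari's conjecture I would argue in characteristic zero, which is harmless because the $h$-vector and the property of being a shellable $O$-sequence are field independent. By Theorem~\ref{thm:AHH} we have $\beta_{i,j}(S/I_{\Delta})=\beta_{i,j}(S/\gin(I_{\Delta}))$ for all $i,j$. Since $\kk[\Delta]$ is level (matroid complexes are level), the top Betti numbers of $S/I_{\Delta}$ sit in a single degree, and by the Betti-number equality the same holds for $S/\gin(I_{\Delta})$, so it too is level. As $\gin(I_{\Delta})$ is strongly stable the last variables form a regular sequence, and the resulting Artinian reduction is a strongly stable Artinian level algebra with $h$-vector $h_{\Delta}$. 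Remark~\ref{rem:shellable} then exhibits $h_{\Delta}$ as the $f$-vector of an $M$-shellable multicomplex, i.e.\ a shellable $O$-sequence; this is Chari's conjecture, which in turn implies Stanley's.

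The step I expect to require the most care is the degree bookkeeping that closes the induction: one must confirm that Lemma~\ref{lem:truncate-schubert} applies in precisely the range $k<d+1$ while the squarefree Veronese handles $k\geq d+1$, and that passing to $\Delta'$ genuinely lowers the corank so the recursion is well founded, with the coloop normalization inserted to keep $s_{1}\geq 2$ at each stage. A more structural alternative would note that Schubert matroids are shifted, so their Alexander duals are shifted and hence sequentially Cohen-Macaulay, yielding componentwise linearity by the Herzog--Hibi criterion; the inductive argument above, however, stays entirely within the tools already developed in this section.
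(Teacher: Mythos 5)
Your proposal is correct and follows essentially the same route as the paper: normalize away coloops to get $s_1\geq 2$, induct on the corank using Lemma~\ref{lem:truncate-schubert} for the components of degree at most $d$, and then pass through $\gin$ (via Theorem~\ref{thm:AHH}, levelness of matroids, and Remark~\ref{rem:shellable}) to get Chari's conjecture. The only cosmetic differences are that you handle the top components by identifying them with squarefree Veronese ideals rather than invoking $\reg(I_{\D})=d+1$ together with \cite[Proposition~1.1]{EG84}, and you check every degree $k$ directly instead of citing \cite{HH99} to restrict to generator degrees.
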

\begin{proof} 
  If $s_1=1$, then $SM_n(s_1,\ldots ,s_d) \cong SM_{n-1}(s_1-1,\ldots
  ,s_d-1)*\{v\}$.  The Stanley-Reisner ideal of $SM_{n-1}(s_1-1,\ldots
  ,s_d-1)*\{v\}$ does not use the variable of $v$ and is componentwise
  linear if and only if the Stanley-Reisner ideal of $SM_n(s_1,\ldots
  ,s_d)$ is componentwise linear.  If $s_d<n$, then $SM_n(s_1,\ldots
  ,s_d)\cong SM_{s_d}(s_1,\ldots ,s_d)$.  The Stanley-Reisner ideal of
  $SM_{s_d}(s_1,\ldots ,s_d)$ equals that of $SM_n(s_1,\ldots ,s_d)$
  plus variables.  One is componentwise linear if and only the
  other~is. Consequently, assume $1<s_1<s_2<\ldots <s_d=n$.
  We proceed by induction on the corank $n-d$.  The base case is
  $n-d=1$ in which $I_{\D}$ is principal.  
  To check that $I$ is componentwise linear, it suffices to check
  $I_{\<k\>}$ for any $k$ in which $I$ has minimal
  generators~\cite{HH99}, and $I_{\D}$ has minimal generators in
  degrees $\le d+1$.
  Since $\reg(I_{\D}) = d+1$, the ideal $(I_{\Delta})_{\<d+1\>}$ has a
  linear resolution~\cite[Proposition~1.1]{EG84}.  By
  Lemma~\ref{lem:truncate-schubert} and the induction hypothesis we
  conclude.
\end{proof}

\section{\texorpdfstring{Matroids with $d+2$ parallel classes}
  {Matroids with d+2 parallel classes}}
\label{sec:partite}

In the remainder of the paper we focus on duals of matroids, or
equivalently, $h$-vectors of cover ideals.  If $\Delta$ is a matroid,
then $h^\D = h_{\Dc}$ is the $h$-vector of $S/J(\Delta)$, the
quotient by the cover ideal of~$\Delta$.  
The one-dimensional skeleton of a matroid is a complete $p$-partite
graph whose groups of vertices correspond to the partition of the
vertex set of the matroid set into \emph{parallel
  classes}~\cite[Corollary~2.3]{CV3}. The main result of this section (Theorem~\ref{thm:d2}) says that
Stanley's conjecture holds for cover ideals of matroids whose number
of parallel classes is at most two more than the rank.   Due to the technical nature of the proof, we divide it into several smaller results, give various examples along the way, and state the general theorem  at the very end.  

Our notation follows closely that of~\cite{CV3}.  Let $\Delta$ be a matroid of
rank~$d$, with parallel classes $A_1,\ldots,A_p$, of cardinalities
$a_1,\ldots, a_p$.  Such matroids are \emph{$p$-partite}.  The
\emph{simplification} $\Ds$ of $\Delta$ is the matroid that arises
from $\Delta$ by replacing each parallel class by a single vertex.  We
begin with a technical condition to be used in many inductive
constructions.
 
\begin{lemma}\label{lem:compatibility}
  Let $\Gamma ' = \langle N_1,\ldots,N_u\rangle$ be a pure order ideal
  in variables $y_1,\ldots,y_{d}$, and let $\Gamma'' = \langle
  M_1,\ldots,M_{v}\rangle$ be a pure order ideal in the variables
  $y_1,\ldots,\widehat{y_{r}},\ldots,y_{d}$, that is, not using
  $y_{r}$.  Assume that $h^{\Delta\setminus A_p} = f(\Gamma')$ and
  that $h^{\lk_{\Delta}A_p} = f(\Gamma'')$.  Suppose that
  $\forall~i\in[u]$, $\exists~ j\in [v]$ such that
\begin{equation}\label{eq:comp}
  \frac{N_i}{y_r^{n_i}} \mid M_j,\quad\text{where } n_i = \max\{m:y_r^m \mid N_i\}.
\end{equation}
Then $h^{\Delta}$ equals the $f$-vector of the pure order ideal
\[
\Gamma = \langle y_r^{a_p}N_1,\ldots,y_r^{a_p}N_u,
y_r^{a_p-1}M_1,\ldots,y_r^{a_p-1}M_v\rangle.
\]
\end{lemma}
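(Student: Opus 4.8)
The plan is to collapse the whole statement into one generating-function identity. For an order ideal $\Lambda$ write $f(\Lambda)(t)=\sum_{\mu\in\Lambda}t^{\deg\mu}$ for the generating function of its $f$-vector, and let $h(t)=\sum_k h_k t^k$. The fact driving everything is the deletion--link recursion for the cover ideal of a $p$-partite matroid,
\begin{equation}\label{eq:recursion-plan}
  h^{\Delta}(t)=t^{a_p}\,h^{\Delta\setminus A_p}(t)+\frac{1-t^{a_p}}{1-t}\,h^{\lk_{\Delta}A_p}(t),
\end{equation}
which I would take from \cite{CV3} or reprove from the Mayer--Vietoris sequence for $\kk[\Dc]$, splitting off the $a_p$ vertices of the series class dual to $A_p$ one at a time (in $\Dc$ every facet omits at most one vertex of $A_p$). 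Once \eqref{eq:recursion-plan} is in hand, the hypotheses $h^{\Delta\setminus A_p}=f(\Gamma')$ and $h^{\lk_{\Delta}A_p}=f(\Gamma'')$ reduce the lemma to the purely combinatorial claim that $f(\Gamma)(t)$ equals the right-hand side of \eqref{eq:recursion-plan}.

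To evaluate $f(\Gamma)(t)$ I would sort the monomials of $\Gamma$ by their $y_r$-free part. Writing $N_i=y_r^{n_i}N_i'$ with $y_r\nmid N_i'$, and setting $\beta(\nu)=\max\{n_i:\nu\mid N_i'\}$ for a $y_r$-free monomial $\nu$, one sees that $y_r^{c}\nu\in\Gamma$ precisely when either $\nu\mid N_i'$ and $c\le a_p+n_i$ for some $i$, or $\nu\mid M_j$ and $c\le a_p-1$ for some $j$. This is exactly where the compatibility condition \eqref{eq:comp} is used, and it is precisely the hypothesis that makes the bookkeeping clean: \eqref{eq:comp} says each $N_i'$ divides some $M_j$, i.e. the order ideal $\langle N_1',\dots,N_u'\rangle$ of $y_r$-free parts is contained in $\Gamma''$. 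Hence every $\nu$ admitted by the first alternative is also admitted by the second, and the exponents that occur are $0\le c\le a_p+\beta(\nu)$ for $\nu\in\langle N_1',\dots,N_u'\rangle$ and $0\le c\le a_p-1$ for the remaining $\nu\in\Gamma''$.

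Summing $\sum_c t^c=(1-t^{m+1})/(1-t)$ over these ranges and splitting the numerator as $1-t^{a_p+\beta(\nu)+1}=(1-t^{a_p})+t^{a_p}(1-t^{\beta(\nu)+1})$, the term carrying $t^{a_p}(1-t^{\beta(\nu)+1})/(1-t)$ reassembles into $t^{a_p}f(\Gamma')(t)$, while the factors $(1-t^{a_p})/(1-t)$ recombine over the disjoint union $\Gamma''=\langle N_1',\dots,N_u'\rangle\sqcup\bigl(\Gamma''\setminus\langle N_1',\dots,N_u'\rangle\bigr)$ into $\tfrac{1-t^{a_p}}{1-t}f(\Gamma'')(t)$. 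This is exactly the right-hand side of \eqref{eq:recursion-plan}, proving $f(\Gamma)=h^{\Delta}$. Finally $\Gamma$ is an order ideal by construction, and it is pure because its two generator families share a degree, i.e. $\deg M_j=\deg N_i+1$; this is the degree bookkeeping reflecting that the link drops the rank by one while the deletion preserves it, which I would check against the definitions of $\Gamma'$ and $\Gamma''$ as realizations of the two $h$-vectors.

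The main obstacle is twofold. First, establishing \eqref{eq:recursion-plan} with the correct boundary behaviour: the asymmetric powers $t^{a_p}$ and $t^{a_p-1}$ attached to the two pieces must be pinned down carefully, since a careless deletion--link split miscounts exactly when $A_p$ fails to be spanning. Second, and more conceptually, one must see that \eqref{eq:comp} is not merely convenient but necessary for the identity: without $\langle N_1',\dots,N_u'\rangle\subseteq\Gamma''$ the monomials $y_r^{c}\nu$ with $\nu\in\langle N_1',\dots,N_u'\rangle\setminus\Gamma''$ and $c<a_p$ would be counted by $f(\Gamma)$ yet unaccounted for on the right-hand side of \eqref{eq:recursion-plan}, so the equality would fail.
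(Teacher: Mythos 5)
Your proof is correct and follows essentially the same route as the paper: both rest on the deletion--link recursion $h_i^{\Delta}=h_{i-a_p}^{\Delta\setminus A_p}+\sum_{j=0}^{a_p-1}h_{i-j}^{\lk_\Delta A_p}$ cited from \cite{CV3}, and both use the compatibility condition exactly to guarantee that the monomials of $\Gamma$ with $y_r$-exponent below $a_p$ are precisely $y_r^c\nu$ with $\nu\in\Gamma''$ and $0\le c\le a_p-1$. Your generating-function packaging (slicing by the $y_r$-free part and summing geometric series) is only a cosmetic reorganization of the paper's slicing by the exact power of $y_r$, so no further comment is needed.
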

\begin{proof} 
  By \cite{CV3} we have for any $i\ge 0$ that
  \[h^{\Delta}_i = h^{\Delta\setminus A_p}_{i-a_p} +
  \sum_{j=0}^{a_p-1} h_{i-j}^{\lk_{\Delta}A_p}.\] It suffices to
  show the corresponding formula for $\Gamma$:
  \[
  f_i(\Gamma) = f_{i-a_p}(\Gamma') + \sum_{j=0}^{a_p-1}
  f_{i-j}(\Gamma'').
  \]
  Fix an index $i$ and write $\Gamma_i = \{M \in \Gamma:\deg M =
  i\}$.  We write $\G_i$ as the disjoint union $ G_{\ge a_p}\sqcup
  G_{a_p-1} \sqcup \ldots \sqcup G_0$, where $G_j = \{M \in
  \Gamma_i:y_r^j\mid M \textup{ but } y_r^{j+1}\nmid M\}$,
  and $G_{\ge a_p}=\{M\in \Gamma_i:y_r^{a_p}\mid M\}.$
  If a generator of $\Gamma$ is divisible by $y_r^{a_p}$, then it cannot come from generators of $\Gamma''$.  Hence $ f_{i-a_p}(\Gamma')
  = |G_{\ge a_p}|$, and it suffices to check that $f_{i-j}(\Gamma'') =
  |G_{a_p-j-1}|$.
  The inequality $f_{i-j}(\Gamma'') \le |G_{a_p-j-1}|$ follows from
  the definition of $\Gamma$. To obtain equality we confirm that each
  monomial in $G_{a_p-j-1}$ divides some generator
  $y_r^{a_p-1}M_l$. Assume there exists a monomial $M =
  y_r^{a_p-j-1}M' \in \Gamma$ (with $y_r\nmid M'$), such that $M\mid
  y_r^{a_p}N_k$, for some $k$.  By \eqref{eq:comp}, there exists $l$ such
  that
  \[ 
  \frac{N_k}{y_r^{n_k}} \mid M_l.
  \] 
  This implies that $M' \mid M_l$, and as $a_p-j-1\le a_p-1$ we
  conclude.
\end{proof}

In our inductive proofs, the matroids $\G'$ are special simplicial
complexes for which Stanley's conjecture is known by \cite{CV3}.  They
are defined as follows.
\begin{definition}\label{def:DeltaT}
  Let $\aaa = (a_1,\dots,a_p)$ be a vector of positive integers.  Fix
  integers $2\leq d\le p$ and $0\leq t \leq d-2$. Let $A_1,\dots, A_p$
  be disjoint sets of vertices with $|A_i|=a_i$ for any $i$. The
  matroid $\Delta_{t}(d,p,\aaa)$ is the rank $d$ matroid on
  $\sum_{i}a_{i}$ vertices with facets
  \begin{equation*}
    A_{i_{1}}\dots A_{i_{d-t}} A_{p-t+1} \dots A_p  \quad \text {where }
    1\leq i_{1} < \dots < i_{d-t} \leq p-t.
  \end{equation*}
  Here $A_{j_1}\dots A_{j_k}$ stands for all sets
  $\{v_{j_1},\dots,v_{j_k}\}$ such that $v_{j_i}\in A_{j_i}$.  The
  matroid $\Delta_{0}(d,p,\aaa)$ is the \emph{complete matroid} of
  rank $d$ with $p$ parallel classes of sizes $a_{1},\dots,a_{p}$.
\end{definition}
The simplification of $\Delta_{t}(d,p,\aaa)$ is isomorphic to
$\Delta_{t}(d,p,\mathbf{1})$, which in turn equals the simplicial join
of the uniform matroid $U_{d-t,p-t}$ of rank $d-t$ on $p-t$ vertices,
with a simplex on $t$ vertices.  The matroids $\Delta_{t}$ appear in
\cite{CV3} with a different numbering of the parallel classes, but
here we find this convention more natural.  The $h$-vector of the
cover ideal of $\Delta_{t} (d,p,\aaa)$ is a pure $O$-sequence by
\cite[Theorem~3.7]{CV3} and we give its order ideal in
Example~\ref{ex:orderIdealDt}, after setting up a useful notation.
\begin{notation}
  Fix positive integers $(a_1,\ldots,a_p)$.  For any set partition
  $\mathcal{P} = P_{1} \sqcup \dots \sqcup P_{d}$ of $[p]$, denote by
  $[\mathcal{P}]= [ P_{1} | P_{2} | \dots | P_{d}]$ the monomial in
  $d$ variables:
  \[ y_1^{-1+\sum_{j\in P_{1}}a_{j}} \cdot\ldots\cdot
  y_{d}^{-1+\sum_{j\in P_{d}}a_{j}}.\] When no confusion may arise, we
  will use this notation for the corresponding partition as well.
\end{notation}

\begin{example}\label{ex:orderIdealDt}
  Fix integers $t, d, p$ such that $0\le t\le d -2\le p-2$, and an
  integer vector $\aaa = (a_{1},\dots, a_{p})$.  For any ascending
  sequence $1 = l_{0} < l_{1} < \dots < l_{d} = p+1$ of integers, let
  $\mathcal{P}(l_{0},\dots,l_{d})$ be the $d$-partition into sets
  $P_{i} = \{l_{i-1},\dots,l_{i}-1\}$.  We define the following pure
  order ideal:
  \[
  \G_{t}(d,p,\aaa) := 
  \<[\mathcal{P}(l_{0},\dots,l_{d-t})|p-t+1|\dots|p] : \text{ for all } 
  1 = l_{0} < l_{1}
  < \dots < l_{d-t} = p-t+1 \>.
  \]
  In particular, when $t=0$ we have
  \[
  \G_{0}(d,p,\aaa) := \< [\mathcal{P}(l_{0},\dots,l_{d})] : \text{ for
    all } 1 = l_{0} < l_{1} < \dots < l_{d} = p+1 \>.
  \]
  By \cite[Theorem~3.7]{CV3} the vector $h^{\D_{t}(d,p,\aaa)}$ equals
  the $f$-vector of $\G_t(d,p,\aaa)$. This equality is not easy to
  check in general. One may prove it by induction for complete
  matroids, then notice that $$\D_t(d,p,(a_1,\dots,a_p)) =
  \D_0(d-t,p-t,(a_1,\dots,a_{d-t})) \ast
  \D_0(t,t,(a_{p-t+1},\dots,a_p)),$$ check that a similar equality
  holds for the pure order ideals (viewed as multicomplexes), and
  finally use the behavior of $h$-vectors and $f$-vectors over star
  products.
%
In this section we are mainly interested in the case $p=d+1$, where
  $\G_{t}(d,d+1,\aaa)$ is generated by
  \small
\[ 
\begin{array}{rcccccccccccccccl}
  \lbrack&1&|&2&|&\cdots&|&d-t+1&|&d-t,d-t+1&|&d-t+2&|&\cdots&|&d+1&\rbrack\\
  \lbrack&1&|&2&|&\cdots&|&d-t+1,d-t&|&d-t+1&|&d-t+2&|&\cdots&|&d+1&\rbrack\\

    &&&&&&&& \vdots &&&&&&&&\\
  \lbrack&1&|&2,3&|&\cdots&|&d-t&|&d-t+1&|&d-t+2&|&\cdots&|&d+1&\rbrack\\
    \lbrack&1,2&|&3&|&\cdots&|&d-t&|&d-t+1&|&d-t+2&|&\cdots&|&d+1&\rbrack.
  \end{array}\]
\normalsize
In particular, for $t=1, d=3, p=4$ and some $\aaa$ we obtain:
\[\begin{array}{rcl}
\G_{1}(3,4,\aaa) &=&   \<[\mathcal{P}(l_{0},l_1,l_{2})~|~ 4] : \textup{ for all }   1 = l_{0} < l_{1}< l_{2} = 4 \> \\
\rule{0pt}{4ex}&=&  \<[\mathcal{P}(1,2,4) ~|~ 4], [\mathcal{P}(1,3,4) ~|~ 4]\> \\
\rule{0pt}{4ex}&=&   \<[1 ~|~ 2,3 ~|~ 4], [1,2 ~|~ 3 ~|~ 4]\>\\
\rule{0pt}{4ex}&=& \< y_1^{a_1-1}y_2^{a_2+a_3-1}y_3^{a_4-1},~~ y_1^{a_1+a_2-1}y_2^{a_3-1}y_3^{a_4-1}\>.\\
\end{array}\]
Plugging in various values for $\aaa$ one can directly check $h^{\D_{1}(3,4,\aaa)} = f_{\G_{1}(3,4,\aaa)}$.
\end{example}

\begin{definition}
  Let $[P_{1}|\cdots|P_{d}]$, $[Q_{1}|\cdots|Q_{d}]$ be $d$-partitions
  of subsets of $[p]$.  For every vector of positive integers $\aaa =
  (a_1,\dots,a_p)$, let $\leq_{\aaa}$ be the partial  order 
  defined by
  \[
  [P_{1}|\cdots|P_{d}] \leq_{\aaa} [Q_{1}|\cdots|Q_{d}] \quad 
  \Longleftrightarrow \quad \sum_{j\in P_{i}} a_{j} \leq \sum_{j\in Q_{i}} a_{j},
  \text{ for all $i = 1,\dots,d$.}
  \]
  For any $(d-1)$-partition $[Q'_1|\dots|Q'_{d-1}]$ of $[p]$ and integer
  $r\in[d]$, a partial order $\leq_{\aaa}^{r}$ is defined by
  \[[P_{1}|\cdots|P_{d}] \leq_{\aaa}^r [Q'_{1}|\cdots|Q'_{d-1}] \quad
  \Longleftrightarrow \quad [P_1|\dots|\widehat{P_r}|\dots|P_d]
  \leq_{\aaa}[Q'_{1}|\cdots|Q'_{d-1}].  \]
\end{definition}

The compatibility condition~\eqref{eq:comp} in
Lemma~\ref{lem:compatibility} can be rewritten using the new notation.
\begin{definition}\label{def:partition comp}
  Let $\mathrm{P} = \{\mathcal{P}_{1}, \dots \mathcal{P}_{s}\}$ be a
  set of $d$-partitions of $[p]$, $\mathrm{Q} =
  \{\mathcal{Q}_{1},\dots,\mathcal{Q}_{r}\}$ a set of
  $(d-1)$-partitions of $[p]$. For every $r\in[d]$ we say that the
  sets $\mathrm{P},\mathrm{Q}$ satisfy the \emph{$r$-compatibility
    condition} if for each $\mathcal{P} \in\mathrm{P}$ there exists a
  $\mathcal{Q}\in\mathrm{Q}$ such that $\mathcal{P} \leq_{\aaa}^r
  \mathcal{Q}$.
\end{definition} 

\begin{example}\label{ex:favorite}
  The sets of partitions $\mathrm{P} =
  \{[1|2|3,4],[1|2,3|4],[1,2|3|4]\}$ and $\mathrm{Q}=\{[1,2|3,4]\}$
  are 3-compatible if and only if $a_2 \le a_4$, while the collections
  $\mathrm{P'} = \{[1|2,3|4,5],[1,2|3|4,5]\}$ and
  $\mathrm{Q'}=\{[1|2,3,4,5], [1,2|3,4,5], [1,2,3|4,5],[1,2,3,4|5]\}$
  are $i$-compatible for any $\aaa$ and any $i = 1,2,3$.
\end{example}
In the new notation, the \emph{gluing} in
Lemma~\ref{lem:compatibility} takes two sets $\G'$ and $\G''$ of
partitions of~$[p-1]$ and produces a set of $d$-partitions of $[p]$.
The procedure consists of
\begin{itemize}
\item[-] adding the element $p$ to each $r$th set of a partition
  in~$\G'$,
\item[-] inserting the set $\{p\}$ into each partition of $\G''$ as
  the $r$th set, shifting the index of the last $d-r$ sets by one.
\end{itemize}
Here is an example of how Lemma \ref{lem:compatibility} can be
applied.  It is one of the base cases in the proof of
Proposition~\ref{prop:d+2partite}.

\begin{example}\label{ex:alex-method}
  Let $\D$ be the rank 3 matroid with 5 parallel classes and facets:
  \[ 
  A_1A_2A_3,\, A_1A_2A_4,\, A_1A_3A_4,\, A_2A_3A_4,\, A_1A_3A_5,\,
  A_1A_4A_5,\, A_2A_3A_5,\, A_2A_4A_5.
  \] 
  As $\D\setminus A_5 = \D_0(3,4,(a_1,a_2,a_3,a_4))$, it holds that
  $h^{\D\setminus A_5} = f(\G_0(3,4,(a_1,a_2,a_3,a_4))$, corresponding
  to
  \[ 
  \mathrm{P} = \{[1|2|3,4],[1|2,3|,4],[1,2|3|4]\}.
  \] 
  The rank 2 matroid $\lk_\D A_5$ is the complete bipartite graph
  $\D_0(2,2,(a_1+a_2,a_3+a_4))$, and thus its $h$-vector is obtained
  from the order ideal generated by $\mathrm{Q}=\{[1,2|3,4]\}$.
  Example \ref{ex:favorite} shows that P and Q are 3-compatible if and
  only if $a_2\le a_4$.  Switching the pairs $(A_1,A_2)$ and
  $(A_3,A_4)$ in $\D$ gives an isomorphic matroid, therefore we may
  assume without loss of generality that $a_2\le a_4$, and obtain by
  Lemma~\ref{lem:compatibility} that $h^\D=f(\G)$ for
   \[\G = \langle [1|2|3,4,5],[1|2,3|4,5],[1,2|3|4,5],[1,2|3,4|5] \rangle.\]
\end{example}
A crucial property of $(d+2)$-partite matroids is that they possess a
dual graph, which together with the vector $(a_1,\dots,a_p) $
completely encodes their structure.
\begin{definition} Let $\Delta$ be a matroid of rank $d$ with $d+2$
  parallel classes and let $\Ds$ be its simplification.  The graph
  $G_{\Delta}$ is the rank two matroid~$(\Ds)^{\mathrm{c}}$.
\end{definition} 
By construction $G_\D$ is a complete $q$-partite graph on $[d+2]$, for
some $q\in\{2,\dots,d+2\}$.  If $G_{\D}$ is a complete graph on $d+2$
vertices (i.e. if $q=d+2$), then its dual is the complete
$(d+2)$-partite matroid, for which Stanley's conjecture holds by
\cite[Theorem~3.5]{CV3}.  However, not all complete $q$-partite graphs
have simple matroids as their duals.
\begin{remark}\label{rem:excluded-graphs}
  For every $d\ge 2$, the bipartite graph with partition $\{1,2\} \cup
  \{3,\ldots,d+2\}$ and the tripartite graph with partition $ \{1\} \cup
  \{2\} \cup \{3,\ldots,d+2\}$ have duals in which  $1$ and $2$ are parallel
  and these are the only $n$-partite graphs with this property.
\end{remark}
\begin{proof}
  The set $\{1,2\}$ is a minimal non-face in the dual of a complete
  $n$-partite graph $G$ if and only if every edge of $G$ has at least
  one of $1$ and $2$ as a vertex.
\end{proof}
\begin{remark}\label{rem:G<->Delta}
  The $[(d-1)+2]$-partite matroid $\lk_\D A_i$ of rank $d-1$
  corresponds to the deletion of $i$ in $G_\D$, that is $G_{\lk_\D
    A_i}=G_\D \setminus i$.
  The $(d+1)$-partite matroid $\D\setminus A_i$ of rank $d$
  corresponds to $\lk_{G_\D}i$ viewed as a matroid on
  $[d+2]\setminus\{i\}$. That is, if $j$ is parallel to $i$ in~$G_\D$,
  it is a loop in the rank one matroid $\lk_{G_\D}i$. 
%
  If the parallel class in~$G_\D$  of  $d+2$ (the vertex corresponding to the 
  parallel class $A_{d+2}$ in $\Delta$) has cardinality $s$, then
  \[ 
  \D\setminus A_{d+2} \cong \D_{s-1}(d,d+1,(a_1,\dots,\,a_{d+1})).\]
  Similar isomorphisms hold for the deletions of the other parallel
  class $A_i$ and each one is determined by which vertices of $G_\D$
  are parallel to~$i$.
\end{remark}
Our proof of Theorem \ref{thm:d2} is an induction on the number of
vertices of~$G_\D$.  Remark~\ref{rem:excluded-graphs} implies that
there are three different bases of induction to consider, dividing the
proof into three cases:
\begin{enumerate}
\item $G_\Delta$ has at most one parallel class of cardinality $\ge 2$.
\item $G_\Delta$ is bipartite.
\item $G_\Delta$ is $r$-partite for $r\ge3$, and at least two parallel classes of cardinality $\ge 2$.
\end{enumerate}
\begin{proposition}\label{prop:case1}
  If $G_\D$ is complete $n$-partite on
  $\{1,\ldots,r\}\cup\{r+1\}\cup\ldots\cup\{d+2\}$, for some $r\geq
  1$, then $h^\D$ is a pure $O$-sequence.
\end{proposition}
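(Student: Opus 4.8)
The plan is to induct on the size $r$ of the unique non-trivial parallel class $B=\{1,\dots,r\}$ of $G_\D$, each time reducing to a Case~1 matroid on fewer vertices and gluing via Lemma~\ref{lem:compatibility}. First I would record the only genuinely new input: the vertices $1,\dots,r$ of $G_\D$ are pairwise parallel, hence interchangeable by an automorphism of $\Ds$, and likewise the singletons $r+1,\dots,d+2$ are interchangeable. Thus throughout the argument I am free to permute the sizes $a_1,\dots,a_r$ among themselves and $a_{r+1},\dots,a_{d+2}$ among themselves. Since $\Ds$ is simple and $\D$ is coloop-free, Remark~\ref{rem:excluded-graphs} (together with the absence of a star) confines the interesting range to $1\le r\le d-1$; for $r\in\{d,d+1\}$ no such matroid exists, so the statement is vacuous there. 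The base case $r=1$ is the complete matroid $\D_0(d,d+2,\aaa)$, whose cover-ideal $h$-vector is the $f$-vector of $\G_0(d,d+2,\aaa)$, and in particular a pure $O$-sequence, by \cite{CV3} and Example~\ref{ex:orderIdealDt}.

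For the inductive step $r\ge 2$, I would fix a vertex $v\in B$ and apply the deletion--link recursion of \cite{CV3} to the parallel class $A_v$. By Remark~\ref{rem:G<->Delta}, since the class of $v$ in $G_\D$ is all of $B$ and $|B|=r$, the deletion is $\D\setminus A_v\cong\D_{r-1}(d,d+1,\aaa)$ (note $r-1\le d-2$), whose cover-ideal $h$-vector equals $f(\G_{r-1}(d,d+1,\aaa))$; call this order ideal $\G'$. The link satisfies $G_{\lk_\D A_v}=G_\D\setminus v$, which again has a single non-trivial part, now of size $r-1$, so $\lk_\D A_v$ is a Case~1 matroid of rank $d-1$ on $d+1$ vertices. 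By the induction hypothesis its cover-ideal $h$-vector is the $f$-vector of an explicit pure order ideal $\G''$ in $d-1$ variables, and, after the appropriate relabelling, $\G''$ does not use one variable $y_\rho$.

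Finally I would assemble $h^\D$ by gluing $\G'$ and $\G''$ along $y_\rho$ as in Lemma~\ref{lem:compatibility}. The hard part is verifying the $\rho$-compatibility condition of Definition~\ref{def:partition comp}: for each generator of $\G'=\G_{r-1}(d,d+1,\aaa)$, which merges a single consecutive pair among the first $d-r+2$ classes while the last $r-1$ stay fixed singletons, I must produce a generator of $\G''$ dominating it in the order $\leq_{\aaa}^{\rho}$ after deleting the $\rho$th block. Unwinding the definitions, this reduces to a short list of inequalities between partial sums of the $a_i$, of exactly the flavour of Example~\ref{ex:favorite} and of the base computation in Example~\ref{ex:alex-method}. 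I expect this compatibility check to be the main obstacle, and the device for clearing it is the symmetry recorded at the outset: reorder $a_1,\dots,a_r$ and $a_{r+1},\dots,a_{d+2}$ so that all the required inequalities hold simultaneously. Once compatibility is established, Lemma~\ref{lem:compatibility} yields $h^\D=f(\G)$ for the glued pure order ideal $\G$, completing the induction and showing that $h^\D$ is a pure $O$-sequence.
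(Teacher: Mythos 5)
Your decomposition is genuinely different from the paper's: you peel one vertex $v$ off the big class $B$ and induct on $r=|B|$, so that $\D\setminus A_v\cong\D_{r-1}(d,d+1,\cdot)$ and the link is again a Case-1 matroid with big class of size $r-1$. The paper instead deletes a singleton class $A_{d+2}$ and inducts on the number $d+2-r$ of singletons, so that the deletion is always the complete matroid $\D_0(d,d+1,\cdot)$, with base case the tripartite graph with three singletons (where the link degenerates to $\D_0(d-1,d,(a_1,\dots,a_{d-1},a_d+a_{d+1}))$ by Remark~\ref{rem:excluded-graphs}). Your identifications of the deletion and the link via Remark~\ref{rem:G<->Delta} are correct, as is the observation that the statement is vacuous for $r\ge d$, and your reduction scheme is viable.

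The gap is that the $\rho$-compatibility check --- which you correctly identify as the crux --- is never carried out, and the device you propose cannot close it as stated. ``Reorder $a_1,\dots,a_r$ and $a_{r+1},\dots,a_{d+2}$ so that all the required inequalities hold simultaneously'' is itself an unproven claim (a finite system of inequalities among the $a_i$ need not be satisfiable by any permutation), and you cannot even write the inequalities down, because your induction hypothesis delivers only \emph{some} pure order ideal $\G''$ with the right $f$-vector, with no control over its generators. The repair is to strengthen the induction hypothesis, exactly as the paper does in its own inductive step: record that $\G''$, built by the same gluing one level down with $B\setminus\{v\}=\{u_1,\dots,u_{r-1}\}$ and $u_{r-1}$ glued into the last block, contains the generator $[I_1|\cdots|I_{d-r+1}|\{u_1\}|\cdots|\{u_{r-2},u_{r-1}\}]$ for every interval partition $I$ of the $d-r+2$ singleton classes (for $r=2$, read this as $[I_1|\cdots|I_{d-1}\cup\{u_1\}]$ with $u_1$ ordered last in $\G_0(d-1,d+1,\cdot)$). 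Each generator of $\G'=\G_{r-1}(d,d+1,\cdot)$ is $[I_1|\cdots|I_{d-r+1}|\{u_1\}|\cdots|\{u_{r-1}\}]$; deleting its $d$-th block $\{u_{r-1}\}$ leaves a partition that agrees with the matching generator of $\G''$ in every block except the last, which is only enlarged. Hence $d$-compatibility holds for \emph{every} $\aaa$ and no reordering is needed --- but without tracking these generators through the induction your argument does not compile into a proof.
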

\begin{proof}
  The proof is by induction on the number $d+2-r$ of singleton
  classes.  By Remark~\ref{rem:excluded-graphs}, the base case is
  $d+2-r=3$, since for larger $r$ the graph $G_{\D}$ is not the dual
  of a simple matroid.  Decompose $\Delta$ into deletion and link
  at~$A_{d+2}$.  By Remark~\ref{rem:G<->Delta}, it holds that
  $\D\setminus A_{d+2} = \D_0(d,d+1,(a_1,\dots,a_{d+1}))$, thus its
  $h$-vector is realized by $\G'=\G_0(d,d+1,(a_1,\dots,a_{d+1}))$,
  which is generated by
  \[
\mathrm{P}=\{
  [1|2|\ldots|d-1|d,d+1],
  [1|2|\ldots|d-1,d|d+1],\dots,[1,2|3|\ldots|d|d+1]\}.\]
  By Remark~\ref{rem:excluded-graphs}, $A_{d}$ and $A_{d+1}$ are
  parallel in $\lk_\D A_{d+2}$, so by Remark~\ref{rem:G<->Delta} we have that $\lk_\D
  A_{d+2}$ is the matroid $
  \D_0(d-1,d,(a_1,\dots,a_{d-1},a_{d}+a_{d+1}))$.  Thus $h^{\lk_\D
    A_{d+2}}=f(\G'')$, with $\G''$ generated by
  \[ \mathrm{Q} =
  \{[1|2|\ldots|d-1,d,d+1],[1|2|\ldots|d,d+1],\ldots,[1,2|3|\ldots|d,d+1]\}.  \]
  It is easy to check that P and Q are $d$-compatible.
  
  In the induction step $\G'$ is as above and $\G''$ is given by the
  inductive hypothesis.  That is to say, we may assume that we applied
  Lemma~\ref{lem:compatibility} $(d-r-1)$ times already, and thus,
  from the last application we have that
  \[
  \G'' \supseteq \langle
  [1|2|\ldots|d-1,d,d+1],[1|2|\ldots|d,d+1],\ldots,[1,2|3|\ldots|d,d+1]\rangle.
  \]
  Compatibility is again straightforward and we conclude.
\end{proof}

The second case, when $G_{\D}$ is bipartite, follows from a general
fact about the join of simplicial complexes (or multicomplexes).  Let
$\Delta$ and $\Delta'$ be two simplicial (multi)complexes on disjoint
vertex sets.  Their \emph{join} is the (multi)complex
$\Delta\ast\Delta' = \{\sigma \cup \sigma'~:~\sigma\in\Delta \textup{
  and } \sigma'\in\Delta'\}$.  The join operation commutes with duals:
$(\Delta\ast\Delta')^{\mathrm{c}} = \Dc\ast\Delta'^{\mathrm{c}}$.  The tensor product
of the Stanley-Reisner rings is the Stanley-Reisner ring of their
join, and by duality, the same statement holds for tensor product of
the quotients by their cover ideals.  In the following remark, the
simplicial join of two order ideals is computed by viewing them as
multicomplexes.

\begin{remark}\label{join} Let $\Delta$ and $\Delta'$ be two matroids, and let $\Gamma$ and $\Gamma'$ be two order ideals.
  If $h^\Delta = f(\Gamma)$ and $h^{\Delta'}=f(\Gamma')$, then
  $h^{\Delta\ast\Delta'}=f(\Gamma\ast\Gamma')$.
\end{remark}
In the next proposition we allow also bipartite graphs with partitions
of cardinality two (i.e. $\D$ is $(d+1)$ partite).  This turns out
useful in the third case.
\begin{proposition}\label{prop:bipartite}
  If $G_\D$ is bipartite with partition
  $\{1,\ldots,s\}\cup\{s+1,\ldots,d+2\}$, then the $h$-vector of the
  cover ideal of $\Delta$ is a pure $O$-sequence.
\end{proposition}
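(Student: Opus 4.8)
The plan is to show that the bipartite hypothesis forces $\Delta$ to decompose as a join of two complete matroids, whose cover ideals are handled by \cite{CV3}, and then to invoke Remark~\ref{join}. Write $t = d+2-s$ and put $A=\{1,\dots,s\}$ and $B=\{s+1,\dots,d+2\}$. Since $G_\D=(\Ds)^{\mathrm c}$, we have $\Ds=(G_\D)^{\mathrm c}$, so the first step is to dualize the complete bipartite graph $G_\D$: its bases are the edges $\{i,j\}$ with $i\in A$ and $j\in B$, hence the bases of $\Ds$ are the complements $[d+2]\setminus\{i,j\}$, that is, the $d$-sets formed by an $(s-1)$-subset of $A$ together with a $(t-1)$-subset of $B$. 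I would recognize this description as the join $\Ds = U_{s-1,s}\ast U_{t-1,t}$ of uniform matroids on $A$ and $B$.

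Next I would lift this decomposition from $\Ds$ to $\Delta$. Because the parallel classes of $\Delta$ sit over the vertices of $\Ds$ and the join respects this class structure, $\Delta$ itself splits as
\[
\Delta = \D_0\big(s-1,\,s,\,(a_1,\dots,a_s)\big)\ast \D_0\big(t-1,\,t,\,(a_{s+1},\dots,a_{d+2})\big),
\]
each factor being the complete matroid of Definition~\ref{def:DeltaT} obtained by inflating the corresponding uniform matroid by the relevant parallel class sizes. In the relaxed case where one part has cardinality two---so that $\Delta$ is only $(d+1)$-partite---the affected factor is $\D_0(1,2,\cdot)$, a single parallel class, and nothing else changes.

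Finally I would assemble the pieces. By \cite[Theorem~3.7]{CV3} (see Example~\ref{ex:orderIdealDt}) the $h$-vector of the cover ideal of each complete matroid $\D_0$ is a pure $O$-sequence, realized by the pure order ideal $\G_0$ of the same parameters. Since the cover-ideal quotient of a join is the tensor product of the factors' quotients, Remark~\ref{join} gives $h^\Delta = f\big(\G_0(s-1,s,\cdot)\ast\G_0(t-1,t,\cdot)\big)$, and the simplicial join of two pure order ideals is again pure; hence $h^\Delta$ is a pure $O$-sequence. The one step that needs care is the very first: correctly dualizing the complete bipartite graph, recognizing the result as a join of uniform matroids, and checking that this decomposition lifts compatibly with the parallel classes (including the degenerate small-part cases permitted by the relaxation). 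The remainder is routine bookkeeping with Remark~\ref{join}.
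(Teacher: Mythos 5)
Your proposal is correct and follows essentially the same route as the paper: decompose $\Delta$ as the join $\D_0(s-1,s,\aaa')\ast\D_0(d+1-s,d+2-s,\aaa'')$ induced by the bipartition of $G_\D$, realize each factor's $h$-vector by a pure order ideal via \cite{CV3}, and combine with Remark~\ref{join}. You simply spell out the dualization of the complete bipartite graph in more detail than the paper's two-line proof does.
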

\begin{proof}
  From the bipartition of $G_\D$ we obtain
$$\Delta  = \D_0(s-1,s,\aaa')\ast\D_0(d+1-s,d+2-s,\aaa''),$$
where $\aaa'=(a_1,\ldots,a_s)$ and
$\aaa''=(a_{s+1},\ldots,a_{d+2})$. Thus \cite[Theorem 3.5]{CV3} and
Remark~\ref{join} show that $\Delta$ satisfies Stanley's conjecture.
\end{proof}
\begin{example}\label{ex:explicit}
  If $h^\D= f(\G_0(s-1,s,\aaa')\ast\G_0(d+1-s,d+2-s,\aaa''))$, then an
  explicit description of the order ideal generators follows from
  Example~\ref{ex:orderIdealDt}:
\small
\[
{
\begin{array}{rcccccccccccccccl} 
\lbrack&1&|&\ldots&|&s-2&|&s-1,s&|&s+1&|&\ldots&|&d&|&d+1,d+2&\rbrack\\
\lbrack&1&|&\ldots&|&s-2&|&s-1,s&|&s+1&|&\ldots&|&d,d+1&|&d+2&\rbrack\\
&&&&&&&&\vdots&&&&&&&&\\
\lbrack&1&|&\vdots&|&s-2&|&s-1,s&|&s+1,s+2&|&\ldots&|&d+1&|&d+2&\rbrack\\
\lbrack&1&|&\ldots&|&s-2,s-1&|&s&|&s+1&|&\ldots&|&d&|&d+1,d+2&\rbrack\\
&&&&&&&&\vdots&&&&&&&&\\
&&&&&&&&\vdots&&&&&&&&\\
\lbrack&1,2&|&\ldots&|&s-1&|&s&|&s+1,s+2&|&\ldots&|&d+1&|&d+2&\rbrack.
 \end{array}}
\]
\end{example}
\begin{lemma}\label{lem:induction basis}
  If $G_{\D}$ is tripartite, with partition
  $\{1,\ldots,s\}\cup\{s+1,\ldots,d+1\}\cup\{d+2\}$, where $s\ge 2$
  and $d\ge 4$, then $h^\D$ is a pure $O$-sequence. It equals $f(\G)$,
  where $\G$ is the pure order ideal obtained by applying
  Lemma~\ref{lem:compatibility} to 
  \begin{align*}
    \G' &= \G_0(d,d+1,(a_1,\ldots,a_{d+1})), \quad \text{and}\\
    \G'' &=
    \G_0(s-1,s,(a_1,\ldots,a_s)))\ast\G_0(d+1-s,d+2-s,(a_{s+1},\ldots,a_{d+2})).
  \end{align*}
\end{lemma}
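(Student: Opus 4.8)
The plan is to decompose $\Delta$ at the parallel class $A_{d+2}$ attached to the singleton part $\{d+2\}$ of $G_\D$ and to build $\G$ by gluing the order ideals of the deletion and the link via Lemma~\ref{lem:compatibility}. First I read off the two pieces from Remark~\ref{rem:G<->Delta}. Because $\{d+2\}$ is a singleton of $G_\D$, the vertex $d+2$ has no parallel vertex, so the deletion is the complete matroid $\D\setminus A_{d+2}=\D_0(d,d+1,(a_1,\dots,a_{d+1}))$, whose $h$-vector is $f(\G')$ with $\G'=\G_0(d,d+1,(a_1,\dots,a_{d+1}))$ by Example~\ref{ex:orderIdealDt}. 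The link satisfies $G_{\lk_\D A_{d+2}}=G_\D\setminus(d+2)$, which is the complete bipartite graph on $\{1,\dots,s\}\cup\{s+1,\dots,d+1\}$; hence by Proposition~\ref{prop:bipartite} and Remark~\ref{join} its $h$-vector is $f(\G'')$ with $\G''$ the stated join of order ideals of the two blocks. The hypotheses $s\ge 2$ and $d\ge 4$ place us in the genuine tripartite case $2\le s\le d-1$, so that both non-singleton parts are nontrivial and $\G',\G''$ are in the regime of Lemma~\ref{lem:compatibility}; the degenerate small cases are the earlier base cases.

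With the two pieces in hand, everything comes down to checking the compatibility condition~\eqref{eq:comp}, i.e. (Definition~\ref{def:partition comp}) the $r$-compatibility of the generating partition sets $\mathrm{P}$ of $\G'$ and $\mathrm{Q}$ of $\G''$ for a well-chosen insertion index $r$. Here $\mathrm{P}=\{\mathcal{P}_1,\dots,\mathcal{P}_d\}$, where $\mathcal{P}_m$ is the $d$-partition of $[d+1]$ with the single doubled block $\{m,m+1\}$ in position $m$, and $\mathrm{Q}$ consists of the $(d-1)$-partitions of $[d+1]$ whose first $s-1$ blocks split $\{1,\dots,s\}$ with one doubled pair $\{k,k+1\}$ and whose last $d-s$ blocks split $\{s+1,\dots,d+1\}$ with one doubled pair $\{l,l+1\}$. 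The key decision is to insert the singleton $\{d+2\}$ \emph{between} the two blocks, that is, to take $r=s$ (legitimate after renaming the variables of $\G''$ so that it omits $y_s$).

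With $r=s$ the check reduces to exhibiting, for each $m$, a $\mathcal{Q}\in\mathrm{Q}$ with $\mathcal{P}_m\leq_\aaa^s\mathcal{Q}$: one deletes the $s$-th block of $\mathcal{P}_m$ and verifies that the resulting $d-1$ block-weights are dominated componentwise by those of $\mathcal{Q}$. I would split into $m<s$, $m=s$, $m>s$. For $m<s$ take the first pair $k=m$, making the first-block weights of $\mathcal{P}_m$ and $\mathcal{Q}$ agree, and the second pair $\{s+1,s+2\}$, so the second block of $\mathcal{Q}$ dominates that of $\mathcal{P}_m$ by the single merge. For $m=s$ take $k=s-1$ and again $\{s+1,s+2\}$. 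For $m>s$ take $k=s-1$ and place the second pair at $\{m,m+1\}$, which reproduces the tail of $\mathcal{P}_m$ exactly. In every case the comparison in each block is either an equality or is slack by a single merged pair, so the domination holds \emph{with no constraint on the sizes} $a_i$.

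The step I expect to be the real obstacle is precisely this choice of $r$. The naive insertion at the last position $r=d$ (as used for the $d=3$ base case in Example~\ref{ex:alex-method}) forces cross-block comparisons such as $a_s\le a_d+a_{d+1}$, which can fail and would require an extra normalization of the $a_i$; choosing $r=s$ aligns the doubled pair of each $\mathcal{P}_m$ with one of the two doubled pairs available in $\mathrm{Q}$ and eliminates all such constraints. Once the $s$-compatibility is verified, Lemma~\ref{lem:compatibility} gives $h^\D=f(\G)$ with $\G$ pure, so $h^\D$ is a pure $O$-sequence, as claimed.
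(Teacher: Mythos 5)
Your proposal is correct and follows the paper's strategy at the structural level: the same decomposition of $\Delta$ at $A_{d+2}$, the same identification of $\G'$ (from the deletion, via Remark~\ref{rem:G<->Delta}) and $\G''$ (from the link, via Proposition~\ref{prop:bipartite} and the join), and the same appeal to Lemma~\ref{lem:compatibility}. Where you genuinely diverge is in the one step that carries the content, namely the compatibility check. The paper glues at the last position $r=d$, and to make the resulting cross-block comparisons work it first normalizes the class sizes, assuming $a_s\le a_{s+1}\le\cdots\le a_{d+1}$ (achievable by permuting within the two non-singleton parts of $G_\D$ and, if necessary, swapping them). You instead glue at $r=s$, aligning the deleted block with the seam between the two factors of the join $\G''$; your case analysis ($m<s$, $m=s$, $m>s$, with the doubled pairs $k$ and $l$ chosen as you describe) is correct and indeed makes every blockwise comparison either an equality or slack by a single merged pair, so no hypothesis on $\aaa$ is needed. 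This is a cleaner verification, and you correctly diagnose that the cross-block inequalities are exactly what forces the paper's normalization. The trade-off is that the two choices of $r$ produce different order ideals $\G$, and the paper's choice is not arbitrary: Proposition~\ref{prop:d+2partite} uses this lemma as the base of an induction in which it matters that the generators produced at each stage have the specific form obtained by gluing in a fixed last position (see the explicit generator lists assumed there). With $r=s$ the headline claim that $h^\D$ is a pure $O$-sequence is proved just as well, but the inductive bookkeeping in the $(d+2)$-partite argument would have to be redone around your version of the glued generators.
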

\begin{proof}
  Without loss of generality, assume $a_{s} \le a_{s+1}\le
  \ldots,a_{d+1}$. The matroid $\Delta\setminus A_{d+2}$ equals
  $\D_0(d,d+1,(a_1,\ldots,a_{d+1}))$, so
  $\G'=\G_0(d,d+1,(a_1,\ldots,a_{d+1}))$.  The matroid
  $\lk_{\D_0}A_{d+2}$ corresponds to the bipartite graph from
  Proposition~\ref{prop:bipartite}, thus $\G''$ can be chosen as in
  the statement and Example~\ref{ex:explicit}. To apply
  Lemma~\ref{lem:compatibility}, we check $d$-compatibility of the
  generators of~$\G'$, and~$\G''$.
  Let $P=[1|\dots| i,i+1| \dots| d |d+1]$ be a generator of $\G'$.
  \begin{itemize}[leftmargin=3ex]
  \item If $i\le s-1$, then choose $Q=[1|\dots|i,i+1|\dots| s| s+1|
    \dots| d,d+1]$ and $P\le_{\aaa}^dQ$ for any~$\aaa$.
  \item If $s\le i \le d$, then choose $Q=
    [1|\dots|s-1,s|s+1|\dots|i+1,i+2 | \dots| d+1]$. For $j<s$ the
    inequality of the $j$th entries is clear. For $j\ge s$, and $j\neq
    i$ the $a_j$ are again ordered, because we assume that $a_j \le
    a_{j+1}$ whenever~$j\ge s$. Their $i$th entries correspond to
    $\{i,i+1\}$ and $\{i+1,i+2\}$, thus as also $a_i\le a_{i+2}$ we
    conclude.
  \item If $i=d+1$, then $[1|2|\dots| d-1|\widehat{d,d+1}] \le_{\aaa}
    [1|2|\dots|d-1,d|\widehat{d+1}]$ for any $\aaa$ and we conclude by
    the previous case. \qedhere
 \end{itemize}
\end{proof}
Example~\ref{ex:alex-method} reproduced the above construction in the
case~$d=s=2$.  We are now ready to prove the third and most
complicated case.

\begin{proposition}\label{prop:d+2partite}
  If $G_\D$ is $q$-partite with $q\ge3$ and has at least two parallel
  classes of cardinality $\ge 2$, then the $h$-vector $h^\D$ is a pure
  $O$-sequence.
\end{proposition}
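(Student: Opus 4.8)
The plan is to continue the deletion--link recursion of the previous two cases, now organized as an induction on the number $d+2$ of vertices of $G_\Delta$ (equivalently on the rank $d$), with Proposition~\ref{prop:case1}, Proposition~\ref{prop:bipartite} and the tripartite Lemma~\ref{lem:induction basis} as base cases. As in Example~\ref{ex:alex-method}, I would first use the matroid isomorphisms permuting parallel classes: relabel so that the vertex $d+2$ of $G_\Delta$ lies in a conveniently chosen part, and order the cardinalities $a_i$ within and across the relevant parts so that the compatibility inequalities $\le_{\aaa}^r$ take their most favorable form, exactly as the assumption $a_s\le a_{s+1}\le\cdots\le a_{d+1}$ is exploited in Lemma~\ref{lem:induction basis}.

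Having fixed the labeling, I would decompose $\Delta$ into deletion and link at $A_{d+2}$ and read off both ingredients from Remark~\ref{rem:G<->Delta}. If the part of $G_\Delta$ containing $d+2$ has size $s$, then $\Delta\setminus A_{d+2}\cong\Delta_{s-1}(d,d+1,(a_1,\dots,a_{d+1}))$, so its cover-ideal $h$-vector is realized by the explicit order ideal $\G'=\G_{s-1}(d,d+1,(a_1,\dots,a_{d+1}))$ of Example~\ref{ex:orderIdealDt}. For the link, $G_{\lk_\Delta A_{d+2}}=G_\Delta\setminus(d+2)$ has only $d+1$ vertices, and I would choose the part of $d+2$ so that this smaller graph falls into one of the already-settled situations---bipartite (Proposition~\ref{prop:bipartite}), tripartite with a singleton part (Lemma~\ref{lem:induction basis}), or with at most one large part (Proposition~\ref{prop:case1})---or else remains in the third case on fewer vertices, where the inductive hypothesis applies. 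In each situation this yields an \emph{explicit} pure order ideal $\G''$ in the variables $y_1,\dots,\widehat{y_r},\dots,y_d$ realizing $h^{\lk_\Delta A_{d+2}}$; as in the induction step of Proposition~\ref{prop:case1}, I would carry this explicit description through the recursion rather than only the bare fact that $h^{\lk_\Delta A_{d+2}}$ is a pure $O$-sequence.

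The core of the argument---and the step I expect to be the main obstacle---is verifying the $r$-compatibility condition of Definition~\ref{def:partition comp} between the generating set $\mathrm{P}$ of $\G'$ and the generating set $\mathrm{Q}$ of $\G''$ for the correct index $r$: for every $d$-partition $\mathcal{P}\in\mathrm{P}$ I must produce a $(d-1)$-partition $\mathcal{Q}\in\mathrm{Q}$ with $\mathcal{P}\le_{\aaa}^r\mathcal{Q}$, that is, with $\sum_{j\in P_i}a_j\le\sum_{j\in Q_i}a_j$ in each coordinate surviving the deletion of the $r$th block of $\mathcal{P}$. Because $\G''$ is supplied recursively---and, as in Proposition~\ref{prop:case1}, may only be known through an inclusion $\G''\supseteq\langle\cdots\rangle$---this matching has to be done generator by generator against a partially specified target, splitting into cases according to which block of $\mathcal{P}$ is enlarged and where the index $r$ falls, in the same spirit as the three-way analysis in Lemma~\ref{lem:induction basis}; the ordering of the $a_i$ chosen at the outset is exactly what forces each of these coordinate inequalities. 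Once compatibility is checked, Lemma~\ref{lem:compatibility} glues $\G'$ and $\G''$ into a pure order ideal $\G$ with $f(\G)=h^\Delta$, completing the induction and showing that $h^\Delta$ is a pure $O$-sequence.
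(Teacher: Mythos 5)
Your plan is the paper's plan: induct on the number of vertices of $G_\Delta$, peel off the parallel class $A_{d+2}$, realize $h^{\Delta\setminus A_{d+2}}$ by the explicit order ideal $\G_{s-1}(d,d+1,(a_1,\dots,a_{d+1}))$ via Remark~\ref{rem:G<->Delta}, obtain $\G''$ for the link recursively with the earlier propositions and Lemma~\ref{lem:induction basis} as base cases, and glue with Lemma~\ref{lem:compatibility}. You also correctly identify that one must carry an explicit (partial) description of $\G''$ through the recursion, not merely its existence. However, the step you flag as ``the main obstacle''---verifying $r$-compatibility between the generators of $\G'$ and those of the recursively built $\G''$---is the entire mathematical content of this proposition, and your proposal does not carry it out, so as written it is a correct outline rather than a proof.

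Concretely, two things are missing. First, the case split: after ordering the parallel classes of $G_\Delta$ consecutively, the paper distinguishes whether $d+2$ is parallel to $d+1$ or to no vertex at all. In the first case, compatibility is extracted by observing that $\lk_{G_\Delta}(d+1)\cong\lk_{G_\Delta}(d+2)$, which pins down specific generators of $\G''$ inherited from the previous gluing step. Second, and more importantly, in the case where $d+2$ is a singleton class your strategy of relabeling so that the $a_i$ are ordered favorably (as in Lemma~\ref{lem:induction basis}) does not obviously survive the recursion: each of the $p-d$ gluing steps would impose its own preferred ordering of the cardinalities, and these demands can conflict, since the generators of $\G''$ depend on the whole history of gluings. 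The paper resolves this by introducing a permutation $\sigma$ that reverses the order of the vertices within each parallel class beyond the first two; with this choice the generators of $\G'$ and $\G''$ line up block by block and the inequalities $P\le_{\aaa}^{r-1}Q$ hold for \emph{every} vector $\aaa$, so no normalization of the $a_i$ is needed at all. Without this device (or a proof that a single global ordering of the $a_i$ suffices for all steps simultaneously), the compatibility check in your inductive step is not established.
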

 
 \begin{proof}
   The proof is a repeated application of
   Lemma~\ref{lem:compatibility} with the tripartite graph of
   Lemma~\ref{lem:induction basis} as the base case.  This is possible
   because of the two parallel classes of cardinality~$\ge 2$.  Order
   the vertices of $G_\D$ such that each parallel class contains
   consecutive vertices.  With this convention, there are only two
   cases to consider:
\begin{enumerate}[leftmargin=9ex]
\item[Case 1:] $d+2$ is parallel to $d+1$ in $G_\D$.
\item[Case 2:] $d+2$ is not parallel to any vertex is $G_\D$.
\end{enumerate}
We use the notation of Lemma~\ref{lem:compatibility} for $\G'$ and
$\G''$.
\paragraph{\textit{Case 1}} Let $\{r, \ldots, d+1, d+2\}$ be the
parallel class of $d+1$ in $G_\D$.  By Remark~\ref{rem:G<->Delta},
$\D\setminus A_{d+2} = \D_{d+2-r}(d,d+1,(a_1,\ldots,a_{d+1}))$, we can choose 
$\G' = \G_{d+2-r}(d,d+1,(a_1,\ldots,a_{d+1}))$.  The matroid $\lk_\D
A_{d+2}$ corresponds to $G_\D\setminus \{d+2\}$, thus by the inductive
hypothesis there exists an order ideal $\G''$ such that $h^{\lk_\D
  A_{d+2}}=f(\G'')$. We may also assume that $\G''$ was obtained by a
repeated application of Lemma~\ref{lem:compatibility}, and thus among
its generators has:
\[ 
[1|2|\dots|r-2,r-1|r|\dots|d,d+1],\dots, [1,2|3|\dots|
r-1|r|\dots|d,d+1].
\] 
These generators appear from generators of the $\G'$ at the previous
step because $\lk_{G_\D}(d+1)$ is isomorphic to $\lk_{G_\D}(d+2)$. Compatibility is
easy to confirm.

\paragraph{\textit{Case 2}} 
 Let $ \{r, \ldots, d+1\}$ be the parallel class of $d+1$
in~$G_\D$.  Define a permutation $\sigma$ of the vertices
of~$G_\D\setminus \{d+2\}$.  In order to not complicate notation more
than necessary, do this inductively on the parallel classes.  The
first two parallel classes remain unchanged. For every other parallel,
reverse the order of its vertices. More precisely, assume for every
$i<r$ that $\sigma$ is already defined.  For every  $ j \in \{r, \ldots, d+1\}$, set
$\sigma(j)=r+d+1-j$.  As $d+2$ is not parallel to any vertex in
$G_\D$, Remark~\ref{rem:G<->Delta} implies that the deletion
$\D\setminus A_{d+2}$ is $ \D_0(d,d+1,(a_1,\ldots,a_{d+1}))$.  Now use
\cite[Theorem 3.5]{CV3} with the vertices permuted by~$\sigma$. That
is we have $h^{\D\setminus A_{d+2} }=f(\G')$, with $\G'$ generated by:
\[\begin{array}{rcccccccccccccccl}
\lbrack&1&|&2&|&\dots&|&m&|&d+1&|&d&|&\dots&|&r+1,r&\rbrack\\
&&&&&&&&\vdots&&&&&&&&\\
\lbrack&1&|&2&|&\dots&|&m&|&d+1,d&|&d-1&|&\dots&|&r&\rbrack\\
\lbrack&1&|&2&|&\dots&|&m, d+1&|&d&|&d-1&|&\dots&|&r&\rbrack\\
&&&&&&&&\vdots&&&&&&&&\\
\lbrack&1,2&|&3&|&\dots&|& d+1&|&d&|&d-1&|&\dots&|&r&\rbrack,\\
\end{array}\]
for some $m$ which plays no role in the proof. 
Inductively construct $\G''$ such that $h^{\lk_\D A_{d+2}}=f(\G'')$.
Assume that $\G''$ was constructed using the same strategy of
permuting and applying Lemma~\ref{lem:compatibility} just with
$(r-1)$-compatibility.  For each $j = r+1,\dots,d+1$, there are $r-1$
generators of $\G''$ which have been added at the $j$th step. This is
due to the fact that the simplification of $\D|_{A_1,\dots,A_{j-1}}$
is dual to the discrete matroid on $j-1$ vertices with $j-r$ loops,
thus its $h$-vector is obtained from
$\G_{j-r}(j-1,j,(a_{\sigma(1)},\dots,a_{\sigma(j)}))$. After applying
the gluing from Lemma~\ref{lem:compatibility}, the generators are:
\[\begin{array}{rcccccccccccccccccl}
\lbrack&1&|&2&|&\dots&|&m',m&|&d+1&|&\dots&|&j,j-1&|&\dots&|&r&\rbrack\\
&&&&&&&&&\vdots&&&&&&&&&\\
\lbrack&1,2&|&3&|&\dots&|&m&|&d+1&|&\dots&|&j,j-1&|&\dots&|&r&\rbrack,\\
\end{array}\]
where $m$ and $m'$ depend on the cardinality of the parallel class of
$r-1$ in~$G_\D$. Their precise description is not needed, as they take
the same values for both $\G''$ and $\G'$.

To check $(r-1)$-compatibility, let $P =
[1|2|\dots|\sigma(i),\sigma(i+1)|\dots|r]$ be a generator of~$\G'$.
If $i<r-1$, then choose $Q$ among the generators added at the
$(d+1)$th step, namely
\[
Q= [1|2|\dots|\sigma(i),\sigma(i+1)|\dots|m|d+1,d| \dots|r].
\]
If $i>r-1$, then choose $Q$ among the generators added at the
$\sigma(i)$th step, namely
\[Q = [1|2|\dots|m',m|d+1|\dots|\sigma(i),\sigma(i+1)|\dots|r].
\]
It is easy to see that in both cases $P\le_{\aaa}^{r-1} Q$ for any
vector~$\aaa$.  Finally, the proof of Case 1 works identically also if
$\sigma$ is applied to the inductive hypothesis.  
\end{proof}
Propositions \ref{prop:case1}, \ref{prop:bipartite}, and
\ref{prop:d+2partite}, together with the $(d+1)$-partite case
\cite[Corollary 3.9]{CV3} imply the main theorem of this section.
\begin{theorem}\label{thm:d2}
  If $\D$ is a rank $d$ matroid with at most $d+2$ parallel classes,
  then the $h$-vector of the quotient by its cover ideal is a pure
  $O$-sequence.
\end{theorem}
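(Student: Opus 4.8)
The plan is to assemble Theorem~\ref{thm:d2} from the case analysis already laid out in the excerpt. The key structural observation is Remark~\ref{rem:excluded-graphs}: for a rank~$d$ matroid $\D$ with exactly $d+2$ parallel classes, the associated dual graph $G_\D$ is a complete $q$-partite graph on $[d+2]$, and the only partition shapes that fail to give a simple matroid on the dual side are the special bipartite and tripartite ones described there. This means that, up to those exceptions, every $(d+2)$-partite matroid falls into exactly one of three mutually exhaustive families according to the partition structure of $G_\D$, and these three families are precisely what Propositions~\ref{prop:case1}, \ref{prop:bipartite}, and~\ref{prop:d+2partite} address.

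First I would dispose of the matroids with strictly fewer than $d+2$ parallel classes. If $\D$ has rank $d$ and at most $d+1$ parallel classes, then either $\D$ has $\le d$ parallel classes, in which case the simplification is a simplex and $\D$ is a join of a complete matroid with trivial pieces (so Stanley's conjecture holds directly by \cite[Theorem~3.5]{CV3}), or $\D$ has exactly $d+1$ parallel classes, which is covered by \cite[Corollary~3.9]{CV3}. This reduces the problem to the case of exactly $d+2$ parallel classes, where the dual graph $G_\D$ genuinely carries the combinatorial data.

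Next I would run the trichotomy on $G_\D$. By Remark~\ref{rem:G<->Delta} and Remark~\ref{rem:excluded-graphs}, a complete $q$-partite graph $G_\D$ on $[d+2]$ yields a simple dual matroid only when its part structure avoids the forbidden shapes; classifying by the number of non-singleton parts gives exactly the three cases enumerated before Proposition~\ref{prop:case1}: (1)~at most one part of size $\ge 2$, handled by Proposition~\ref{prop:case1}; (2)~$G_\D$ bipartite, handled by Proposition~\ref{prop:bipartite}; and (3)~$G_\D$ is $q$-partite with $q\ge 3$ and at least two parts of size $\ge 2$, handled by Proposition~\ref{prop:d+2partite}. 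Since every admissible $G_\D$ lands in one of these three families, and each proposition concludes that $h^\D$ is a pure $O$-sequence, the theorem follows by combining the cases. The statement is then field-independent because $h$-vectors of simplicial complexes do not depend on~$\kk$.

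The main obstacle is not in this final assembly—which is essentially a bookkeeping argument verifying that the three propositions together with the low-parallel-class results exhaust all possibilities—but rather in confirming that the case division is genuinely complete, i.e. that no $(d+2)$-partite matroid with a simple dual escapes the trichotomy. This rests entirely on Remark~\ref{rem:excluded-graphs} correctly identifying \emph{all} the $n$-partite graphs whose duals fail to be simple, so the one point to check carefully is that the exceptional bipartite shape $\{1,2\}\cup\{3,\dots,d+2\}$ and tripartite shape $\{1\}\cup\{2\}\cup\{3,\dots,d+2\}$ are subsumed correctly: the former is absorbed into Proposition~\ref{prop:bipartite} (which explicitly permits parts of size two), and the latter is precisely the base case $d+2-r=3$ of Proposition~\ref{prop:case1}. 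Once this exhaustiveness is verified, the proof of Theorem~\ref{thm:d2} is a one-line invocation of the accumulated propositions.
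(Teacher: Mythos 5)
Your proposal is correct and takes essentially the same route as the paper, whose entire proof is the one-sentence assembly of Propositions~\ref{prop:case1}, \ref{prop:bipartite}, and~\ref{prop:d+2partite} together with the $(d+1)$-partite case from \cite{CV3}. The exhaustiveness check you emphasize is exactly the trichotomy on $G_\D$ that the paper sets up (via Remark~\ref{rem:excluded-graphs}) just before Proposition~\ref{prop:case1}.
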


\section{Small type}
\label{sec:type}
If $h^{\Delta} = h_{\Dc}$ is the $h$-vector of the cover ideal
of a matroid $\Delta$, then its last entry is the Cohen-Macaulay type
of~$\kk[\Dc]$.  If it is small, then the parallel classes of the
matroid must be few thanks to \cite[Remark 4.4]{CV3}: Precisely, if a
matroid is of rank $d$ and has $p$ parallel classes, then its type is
at least $p-d+1$.  Theorem~\ref{thm:type5} exploits this fact to prove
that $h^{\Delta}$ is a pure $O$-sequence whenever the type is at most
five.  We start with a proposition that shows that among the simple
matroids there is only one of rank $d$ with $p$ parallel classes and
whose type is $p-d+1$.

\begin{proposition}\label{prop:typebound}
  Let $\D$ be a $p$-partite matroid of rank $d$. Then
  $\type(S/J(\D))=p-d+1$ if and only if $\Ds=\D_{d-2}(d,p,{\bf 1})$.
\end{proposition}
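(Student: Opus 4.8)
Throughout I use that $\type(S/J(\D))$ is the last entry of $h^{\D}=h_{\Dc}$, together with two of its stability properties. First, the type is multiplicative under direct sums: dualizing turns $\oplus$ into the join $\ast$, so by the tensor-product description of cover ideals (as in Remark~\ref{join}) one has $\type(S/J(\D\oplus\D'))=\type(S/J(\D))\cdot\type(S/J(\D'))$. Second, the type depends only on the simplification $\Ds$. For a non-coloop parallel class this follows by reading the recursion of \cite{CV3} (the one in Lemma~\ref{lem:compatibility}) in the top degree $s=\big(\sum_i a_i\big)-d$: there the deletion term and all but one link term vanish, leaving $\type(S/J(\D))=\type(S/J(\D\setminus A_p))+\type(S/J(\lk_\D A_p))$, and neither $\D\setminus A_p$ nor $\lk_\D A_p$ depends on $a_p$. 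A coloop class is a rank-one direct summand $U_{1,a_p}$ with $\type(S/J(U_{1,a_p}))=1$ for all $a_p$, so multiplicativity again gives independence of $a_p$. Iterating, $\type(S/J(\D))=\type(S/J(\Ds))$, which reduces the problem to the simple case $\D=\Ds$ on $p$ vertices.

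The ``if'' direction is now immediate and needs no reduction: by Example~\ref{ex:orderIdealDt} the vector $h^{\D_{d-2}(d,p,\aaa)}$ is the $f$-vector of the pure order ideal $\G_{d-2}(d,p,\aaa)$, which has exactly $p-d+1$ generators, and the last entry of a pure $O$-sequence equals its number of generators; hence the type is $p-d+1$.

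For the converse, assume $\D$ simple and write $\D=\D_1\oplus\dots\oplus\D_k$ for its connected components. By multiplicativity $\type(S/J(\D))=\prod_i\type(S/J(\D_i))$. Put $d_i=\rk\D_i$, let $p_i$ be the number of vertices of $\D_i$, and set $m_i=p_i-d_i+1$. A coloop gives $m_i=1$; any other component is connected on at least two vertices, hence not free, so $p_i\ge d_i+1$ and $m_i\ge2$. Applying \cite[Remark~4.4]{CV3} to each component yields $\type(S/J(\D_i))\ge m_i$, and the elementary inequality $\prod_i m_i\ge\sum_i m_i-(k-1)=p-d+1$ for integers $m_i\ge1$ (with equality exactly when at most one $m_i$ exceeds $1$) gives
\[
\type(S/J(\D))=\prod_i\type(S/J(\D_i))\ \ge\ \prod_i m_i\ \ge\ p-d+1 .
\]
Equality throughout forces $\type(S/J(\D_i))=m_i$ for every $i$ together with at most one component having $m_i>1$.

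The decisive point---and the step I expect to be the main obstacle---is the strict inequality for connected components of large rank: a connected simple matroid $M$ of rank $r\ge3$ on $n$ vertices satisfies $\type(S/J(M))>n-r+1$. Granting this, a component with $\type=m_i$ must have rank $\le2$, i.e.\ be a coloop or some $U_{2,p_i}$ (for which $\type=p_i-1=m_i$), and at most one of these is non-trivial; hence $\D=U_{2,\,p-d+2}\oplus(d-2\text{ coloops})=\D_{d-2}(d,p,\mathbf{1})$, completing the characterization (the boundary case $p=d$ being the boolean matroid). To prove the strictness I would use the deletion--contraction formula at one well-chosen vertex $e$ (every vertex is a non-coloop since $M$ is connected): it suffices to find $e$ with $\type(S/J(M/e))\ge2$, for then $\type(S/J(M))\ge\big((n-1)-r+1\big)+2>n-r+1$, using \cite[Remark~4.4]{CV3} on the simple matroid $M\setminus e$. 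Now $\type(S/J(M/e))\ge2$ holds as soon as $(M/e)^{\mathrm{si}}$ is not free, equivalently $M/e$ has a circuit of size $\ge3$, because a simple non-free matroid has more vertices than its rank and \cite[Remark~4.4]{CV3} then gives type at least $2$. Finally such an $e$ exists: being simple, connected and non-free, $M$ has a circuit $C$ with $|C|\ge3$; if $|C|\ge4$ then minimality of $C$ makes $C\setminus e$ a size-$\ge3$ circuit of $M/e$ for any $e\in C$, while if every circuit has size $3$ one takes a circuit $\{x,y,z\}$ and, using $r\ge3$, a vertex $e$ off the line spanned by $x,y,z$, so that $\{x,y,z\}$ remains a circuit in $M/e$. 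This establishes the strictness and, with the equality analysis above, the proposition.
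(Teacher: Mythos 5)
Your argument is correct, but it takes a genuinely different route from the paper's. The paper also reduces to the simple case (by citing \cite[Proposition~2.8]{CV3}) and then inducts on $p-d$: reading the deletion/link recursion of \cite[Remark~1.7]{CV3} in top degree forces $\type(S/J(\D\setminus p))=p-d$ and $\type(S/J(\lk_{\D}p))=1$, the inductive hypothesis identifies $\D\setminus p$ as $\D_{d-2}(d,p-1,{\bf 1})$, and a hands-on basis-exchange analysis then reconstructs $\D$ itself. You instead decompose the simplification into connected components, use multiplicativity of the Cohen--Macaulay type under direct sums (i.e.\ joins, via the tensor-product description of the cover-ideal quotients), the elementary inequality $\prod_i m_i\ge\sum_i m_i-(k-1)$ with its equality analysis, and a strictness lemma --- a connected simple matroid of rank $r\ge 3$ on $n$ elements has type strictly greater than $n-r+1$ --- proved by exhibiting a non-coloop $e$ whose contraction still has a circuit of size $\ge 3$ and applying the same top-degree additivity $\type(M)=\type(M\setminus e)+\type(M/e)$ together with \cite[Remark~4.4]{CV3}. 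I checked the delicate points and found no gap: your circuit $C\setminus e$ really is a circuit of $M/e$ (minimality of $C$ rules out a smaller circuit of $M$ inside $C$), the vertex off the line exists precisely because $r\ge 3$, and the degree bookkeeping making the recursion additive on last $h$-vector entries works for simple matroids at non-coloop vertices (the paper relies on the same fact). What your route buys is a clean structural statement of independent interest and the replacement of the paper's basis-exchange case analysis by a short circuit argument; what the paper's route buys is a more self-contained induction that never needs connectivity, multiplicativity of type, or the product-versus-sum inequality. Two small things you should spell out if this were written up: that the $p-d+1$ generators of $\G_{d-2}(d,p,\aaa)$ are pairwise distinct monomials (so the last entry of the $f$-vector really is the number of generators), and the justification of multiplicativity of type under tensor products (standard, via canonical modules or tensoring resolutions).
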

\begin{proof}
  By \cite[Proposition 2.8]{CV3} we can assume that $\D$ is simple.
  \cite[Remark~4.4]{CV3} shows that $\type(S/J(\D))\geq p-d+1$, and
  equality holds if $\D=\D_{d-2}(d,p,{\bf 1})$.  Assume that $\Delta$
  satisfies $\type(S/J(\D)) = p-d+1$.  The proof is by induction on
  $p-d$.  The base case is when $d=p$ in which case $\Ds$ is a
  simplex.  Now assume that $p-d$ is positive.  Without loss of
  generality assume that the vertex $p$ is not a cone point (otherwise
  relabel the vertices).  By \cite[Remark~1.7]{CV3} we have
  \[
  h_k^\D=h_{k-1}^{\D\setminus p}+h^{\lk_{\D}p}_{k}\ \ \ \forall \
  k\in \ZZ.
  \]
  Again by \cite[Remark~4.4]{CV3} and since $\type(S/J(\D))=p-d+1$, we
  get $\type(S/J(\D\setminus p))=p-d$ and $\type(S/J(\lk_{\D}p))=1$.
  The matroid $\lk_{\D}p$ is $(d-1)$-partite and, by the induction
  hypothesis, $\D \setminus p=\D_{d-2}(d,p-1,{\bf 1})$.  After
  potentially relabeling the vertices, $\{1,2,\ldots ,d-2,i,j\}$ is a
  face of $\D$ for all $i,j\in\{d-1,\ldots ,p-1\}$.  If $\{1,2,\ldots
  ,d-2,p\}$ was not a face of $\D$, then 
  there is some $k\in\{1,\ldots ,d-2\}$ such that $\{1,\ldots
  ,\hat{k},\ldots , d-2,i,j,p\}$ is a face of $\D$ for all $i$ and $j$
  in $\{d-1,\ldots ,p-1\}$.  This would imply that $\{i,j\}\in
  \lk_{\D}p$ for all $i,j\in\{1,\ldots , p-1\}\setminus\{k\}$ and
  $\lk_{\D}p$ would be $(p-2)$-partite---a contradiction.  Therefore
  $\{1,2,\ldots ,d-2,p\}$ is a face of $\D$.  We now show that, for
  fixed $i\in\{d-1,\ldots ,p-1\}$, the set $\{i,k\}$ is a face of
  $\lk_{\D}p$ for all $k\in\{1,\ldots ,d-2\}$.  If not, then
  $\{1,\ldots ,d-2,j,p\}$ is a facet of $\D$ for all $j\in\{d-1,\ldots
  ,p-1\}\setminus\{ i\}$.  Pick $r,s\in\{d-1,\ldots ,p-1\}\setminus
  \{i\}$.  Certainly $B=\{1,\ldots ,d-2,r,s\}$ is a facet of $\D$.
  Since $i$ is parallel to some $k\in\{1,\ldots ,d-2\}$ (in
  $\lk_{\D}p$), also $B'=(\{1,\ldots ,d-2,r,p\}\setminus\{k\})\cup
  \{i\}$ is a facet of~$\D$.  Removing $k$ from $B$, the only way to
  satisfy basis exchange among $B$ and $B'$ is that $\{r,s,p\}$ is a
  face of~$\D$.  In this case, however, $\lk_{\D}p$ would be
  $d$-partite, since the restriction of its 1-skeleton to the vertices
  $\{1,\ldots ,d-2,r,s\}$ would be a complete graph.
\end{proof}

\begin{remark}
  Theorem~4.3 in \cite{CV3} says that $h^{\Delta_{d-2}(d,p,{\bf 1})}$
  is a componentwise lower bound for all simple matroids of rank $d$
  on $p$ vertices.
\end{remark}
 
\begin{theorem}\label{thm:type5}
  Let $\Delta$ be a matroid and $h^{\Delta} = (1,h_1,\ldots, h_s)$ its
  $h$-vector.  If $h_s\leq 5$, then $h^{\Delta}$ is a pure
  $O$-sequence.
\end{theorem}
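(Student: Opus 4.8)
The plan is to bound the number of parallel classes by the type, dispose of the easy ranges using Theorem~\ref{thm:d2}, and isolate a single residual case. Write $d$ for the rank of $\D$ and $p$ for its number of parallel classes. Since $h_s = \type(S/J(\D))$, the inequality $\type(S/J(\D)) \ge p - d + 1$ from \cite[Remark~4.4]{CV3} gives $p - d + 1 \le h_s \le 5$, hence $p \le d + 4$. If $p \le d + 2$ the conclusion is immediate from Theorem~\ref{thm:d2}, so only $p \in \{d+3, d+4\}$ remains.

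Next I would exploit the rigidity of the minimal type. Whenever $h_s = p - d + 1$---which is forced when $p = d + 4$, and is one of the two subcases when $p = d + 3$---Proposition~\ref{prop:typebound} gives $\Ds = \Delta_{d-2}(d, p, \mathbf{1})$, so $\D = \Delta_{d-2}(d, p, \aaa)$ for its multiplicity vector $\aaa = (a_1, \dots, a_p)$. The explicit pure order ideal $\G_{d-2}(d, p, \aaa)$ of Example~\ref{ex:orderIdealDt} then realizes $h^\D$ by \cite[Theorem~3.7]{CV3}. This settles $p = d + 4$ completely and $p = d + 3$ with $h_s = 4$.

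The remaining, and genuinely new, case is $p = d + 3$ with $h_s = 5$, which I expect to be the main obstacle. Here the type exceeds its minimum value $p - d + 1 = 4$ by exactly one, so Proposition~\ref{prop:typebound} no longer pins down the matroid. My plan is to proceed as in Section~\ref{sec:partite}, decomposing $\D$ at a suitably chosen parallel class $A_p$: the deletion $\D \setminus A_p$ is $(d+2)$-partite and is handled by Theorem~\ref{thm:d2}, producing an order ideal $\G'$, while the link $\lk_\D A_p$ has rank $d - 1$ and $(d-1) + 3$ parallel classes. One then wants to realize $h^{\lk_\D A_p}$ by an order ideal $\G''$ not involving the relevant variable and to glue $\G'$ and $\G''$ by Lemma~\ref{lem:compatibility}, using the recursion $h^\D_i = h^{\D \setminus A_p}_{i-a_p} + \sum_{j=0}^{a_p - 1} h^{\lk_\D A_p}_{i-j}$. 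To keep the link within reach of the previous cases, I would choose $A_p$ so that $\lk_\D A_p$ again has type at most five---either the minimal type~$4$, where Proposition~\ref{prop:typebound} and \cite[Theorem~3.7]{CV3} apply directly, or type~$5$, where an induction on the rank closes the argument---after first removing any coloops via the join reduction of Remark~\ref{join}.

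The hard part will be verifying the compatibility condition~\eqref{eq:comp}, precisely the recurring obstacle flagged in the introduction. Concretely, I would first classify the coloop-free simple matroids of rank $d$ on $d + 3$ vertices whose cover ideal has type exactly five; passing to the rank-three dual $(\Ds)^{\mathrm{c}}$ and using the componentwise lower bound $h^{\Delta_{d-2}(d,p,\mathbf{1})}$ from \cite{CV3} to constrain its collinearities should reduce this to a short explicit list. For each configuration on the list I would exhibit compatible choices of $\G'$ and $\G''$---after relabeling the parallel classes, which is harmless since permutations yield isomorphic matroids, exactly as in Example~\ref{ex:alex-method} and the proof of Proposition~\ref{prop:d+2partite}---and check~\eqref{eq:comp} uniformly in the multiplicity vector $\aaa$. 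Carrying out this finite but delicate case analysis, and confirming compatibility in every branch, is the technical heart of the proof.
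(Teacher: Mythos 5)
Your reduction to $p\le d+4$ via $\type(S/J(\D))\ge p-d+1$, the dispatch of $p\le d+2$ by Theorem~\ref{thm:d2}, and the use of Proposition~\ref{prop:typebound} together with \cite[Theorem~3.7]{CV3} for $p=d+4$ and for $p=d+3$ with $h_s=4$ all match the paper. The gap is in the case you correctly identify as the crux, $p=d+3$ with $h_s=5$: you outline a constructive program (classify the relevant simple matroids, then build order ideals by gluing via Lemma~\ref{lem:compatibility}) but do not execute it, and you miss the fact that makes the paper's proof go through, namely that this case is \emph{vacuous} --- there are no simple matroids of rank $d$ on $d+3$ vertices whose cover ideal has type five, so there is nothing to construct.

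Two concrete problems with your plan as stated. First, your proposed tool for making the classification finite (the componentwise lower bound $h^{\Delta_{d-2}(d,p,\mathbf{1})}$) only recovers $\type\ge p-d+1=4$, which you already know; it does not bound $d$. The paper instead dualizes to a rank-three matroid on $n=d+3$ vertices and applies the Brown--Colbourn inequalities --- which hold for matroid $h$-vectors but not for general pure $O$-sequences --- to get $e\le 3n-2$ for the $q$-partite $1$-skeleton, forcing $n\le 6$ and yielding the short explicit list of Table~\ref{tab:graphs}; the Mayhew--Royle database then confirms that none of the matroids with those $h$-vectors has a simple dual. Without Brown--Colbourn (or some substitute) your ``short explicit list'' is not justified to be finite, since $d$ is unbounded. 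Second, your fallback --- verifying the compatibility condition~\eqref{eq:comp} for each surviving configuration --- is exactly the step the paper flags as an unresolved obstacle in general; the induction you sketch on the link $\lk_\D A_p$ also assumes it has $(d-1)+3$ parallel classes, whereas classes can merge in the link (Remark~\ref{rem:G<->Delta}), so the inductive bookkeeping would need repair even if the case were nonempty.
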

\begin{remark}
  By duality, Theorem~\ref{thm:type5} also holds for Stanley-Reisner
  ideals.
\end{remark}
\begin{proof}[Proof of Theorem~\ref{thm:type5}]
  By \cite[Remark~4.4]{CV3} $\type(S/J(\D))\geq p-d+1$ which in our
  case implies $p \leq d+4$.  The cases $p = d$ and $p=d+1$ are
  trivial, and $p=d+2$ is the content of Theorem~\ref{thm:d2}.
  By Proposition~\ref{prop:typebound}, if $p=d+4$, then
  $\Ds=\D_{d-2}(d,p,{\bf 1})$ and the result follows from
  \cite[Theorem~3.7]{CV3}.  It remains to check the case $p=d+3$,
  however, there are no simple matroids with cover ideal of type five
  such that $p=d+3$.  To see this, assume $\Delta$ is such a matroid
  and consider its dual~$\Dc$.  The simplification $\Ds$ has
  the same type, so we can assume that $\Delta$ is simple and
  consequently $\Dc$ is of rank three.  Let $G$ be the complete
  $q$-partite graph which is the 1-skeleton of~$\Dc$.  Since $\Dc$ is
  of rank three, $q\geq 3$. Let $b_{1}\geq
  \dots \geq b_{q}$ be the sizes of the parallel classes in~$G$ which
  we can assume ordered nonincreasingly.  
  Let $h^{\Dc}=(1,h_{1},h_{2},5)$ be the
  $h$-vector.  By the Brown-Colbourn
  inequalities~\cite[Theorem~3.1]{brown1992roots}, $1-h_{1}+h_{2} \leq
  5$.  If $n \leq d+3$ is the number of vertices of $G$ and $e$ the
  number of edges, then $h_{1} = n-3$ and $h_{2}=3-2n+e$.  It follows
  that $e\leq 3n-2$.  Now, if $q=3$, then $b_{i} \geq 3$ for
  $i=1,\dots,q$ and $e>3n-2$.  If $q=4$, then $b_{i}\geq 2$ for
  $i=1,\dots,q$, except for one graph in which $b_{4} = 1$ and
  $b_{2}=b_{3}=b_{4}=2$.  If $q=5$, there are five possible graphs.
  If $q=6$, then $K_{6}$ the complete graph is the only possible
  graph.  When the graph is fixed, the $h$-vector of $\Dc$ is
  fixed.  Table~\ref{tab:graphs} summarizes the possible graphs and
  their $h$-vectors.
  \begin{table}[htpb]
    \centering
    \begin{tabular}{|c|c|c|}
      \hline
      \raisebox{.5ex}{$q$} & \raisebox{.3ex}{$(b_{1},\dots,b_{q})$} &\rule{0pt}{2.7ex} $h^{\Delta}$ \\
      \hline\hline
      4 & $(2,2,2,1)$      & $(1,4,7,5)$ \\
      5 & $(1,1,1,1,1)$    & $(1,2,3,5)$ \\
      5 & $(2,1,1,1,1)$    & $(1,3,5,5)$ \\
      5 & $(2,2,1,1,1)$    & $(1,4,8,5)$ \\
      5 & $(3,1,1,1,1)$    & $(1,4,7,5)$ \\
      5 & $(4,1,1,1,1)$    & $(1,5,9,5)$ \\
      6 & $(1,1,1,1,1,1)$  & $(1,3,6,5)$ \\
      \hline
    \end{tabular}\vspace*{1ex}
    \caption{\label{tab:graphs} Possible $q$-partite graphs in the proof of Theorem~\ref{thm:type5}}
  \end{table}
  Using the database of Mayhew and Royle~\cite{MR}, a simple for-loop
  in Sage enumerates all matroids of rank three, filters those with
  the given $h$-vectors, computes their duals, and confirms that none
  is simple.
\end{proof}

\begin{remark}
  The matroid $\D_{d-2}(d,p,{\bf 1})$ is the only matroid of type $t$
  which satisfies $p = d+t-1$ and in the proof of
  Theorem~\ref{thm:type5} we showed that, if $t=5$, then there is no
  matroid of type five such that $p=d+3$.  It would be interesting to
  understand for which $t$ there is a such a gap in the allowable
  number of parallelism classes.
\end{remark}

\section{The search for counterexamples}
\label{sec:computer}
As soon as the number of variables $d$, the socle degree $s$, and the
type $t$ are fixed, one can enumerate all pure $O$-sequences with
these characteristics.  A pure order ideal with these data is
generated by $t$ monomials of degree~$s$.  Let $N_{d,s} =
\binom{s+(d-1)}{d-1}$ be the number of monomials of degree $s$ in $d$
variables.  A priori, there are $\binom{N_{d,s}}{t}$ generating sets
of order ideals to consider and our program loops over these,
computing their $f$-vectors.  Naturally, many of those socles will be
equivalent after relabeling the variables, or have the same $f$-vector
even if they are not equivalent.  One may hope to reduce the number of
combinations by exploiting this symmetry.  However, it is not clear
how to do so.  Checking if two socles are equivalent after permuting
the variables is computationally more expensive than just computing
the $f$-vectors of the order ideals they generate.  One shortcut that
is easy to implement is to require the lexicographically first
monomial in each socle to have weakly increasing exponent vector.
This can be achieved by a permutation of the variables and is quick to
check.  Further improvements are possible if one is not interested in
all pure $O$-sequences, but just wants to check a particular example.
The computation of the face numbers of an order ideal descends degree
by degree.  In each step, the program searches for monomials that
divide the given monomials in the previous degree.  If a candidate
$h$-vector is given, then one can stop the degree descent as soon as
there is disagreement between the candidate vector and the number of
monomials in the current degree.  Our software implements all of these
shortcuts.

\begin{example}\label{ex:doubleBinomial}  By Theorem~\ref{thm:type5}
  and~\cite{DKK} any candidate counterexample for Stanley's conjecture
  must be on at least ten vertices and of Cohen-Macaulay type six.
  Assume that $\Delta$ is of rank four.  For $h$-vectors of cover
  ideals, checking an example with this data amounts to enumerating
  order ideals generated by six monomials of degree six in four
  variables.  Our implementation handles approximately 30000 order
  ideals per second on a standard laptop.  Checking all $\binom{84}{6}
  = 406481544$ potential socles would take approximately four hours.
  However, this number grows quickly.  If a counterexample exists and
  was of rank five on twelve vertices and type seven, then a
  back-of-the-envelope calculation estimates the computational time as
  around 173 CPU years.
\end{example}

Lemma~\ref{lem:compatibility} inspires a method to search for pure
order ideals.
\begin{method} \label{meth:alex} Let $\Delta$ be a $p$-partite matroid
  of rank $d$ with parallel classes $A_{1},\dots,A_{p}$ which we may
  choose ordered such that $A_1\ldots A_d \in \D$, that is
  $\{v_{1},\dots,v_{d}\}$ is a facet whenever $v_{i}\in A_{i}$ for all
  $i=1,\dots,d$.  To find a pure order ideal whose $f$-vector
  equals~$h^{\D}$, instead of enumeration, one may proceed as follows.
  \begin{enumerate}[leftmargin=4ex]
  \item\label{one} For each $i\in\{d,\ldots,p\}$ let $G_i$ be the set
    of generators of $\G_0(d-1,i-1,(a_1,\ldots,a_{i-1}))$.
  \item\label{two} Compute $c_i$, the last entry of the $h$-vector of
    $\lk_{\D|_{A_1\cup\dots\cup A_{i-1}}} A_i$.
  \item\label{three} For every $i\in\{d,\ldots,p\}$ choose a
    $c_i$-subset $H_i$ of $G_i$.
  \item\label{four} Define $\G =\< \overline{H}_d\cup\ldots\cup
    \overline{H}_p \>$, where the collection of partitions
    $\overline{H}_j$ is obtained by adding the set $\{j,\ldots,p\}$ to
    every $(d-1)$-partition of $[j-1]$ contained in $H_j$.
  \item\label{five} Check if $h^\D = f(\G)$.
\end{enumerate}
\end{method}

The gist of this method is, instead of searching all socles, to only
search order ideal generators among the monomials that could
potentially arise from a repeated application of
Lemma~\ref{lem:compatibility}.  The method starts at the complete
matroid $\D|_{A_1\cup\dots\cup A_d}$ and reconstructs $\D$ by gluing
the remaining parallel classes.  In this process it mimics the
construction of Lemma~\ref{lem:compatibility} in many different ways.
The compatibility condition is never checked.  It is faster to just
confront the $f$-vector of the final result with~$h^\D$.

The choice of ordering of the $A_{i}$ fixes the order in which
Lemma~\ref{lem:compatibility} would be applied (and one may try
different orderings).  Step~\eqref{one} creates lists of candidates
for the generators of $\G''$ (in the notation of the lemma).
Steps~\eqref{two} and~\eqref{three} enumerate the sets of order ideal
generators that may result from the choices.  Finally,
Step~\eqref{four} implements the gluing in
Lemma~\ref{lem:compatibility}.  Evidently, if the procedure does not
find an order ideal whose $f$-vector is $h^{\Delta}$ we have not found
a counterexample.
 
\begin{example}
  In specific examples, the number of orderings of the parallel
  classes can be reduced using symmetries of the matroid.  For
  instance in Example~\ref{ex:alex-method} the pairs $(A_1,A_2)$ and
  $(A_3,A_4)$, and also the classes in each pair, could be exchanged.
  Given that $A_1A_2A_5$ and $A_3A_4A_5$ are not in $\D$, the only
  orderings to check in this case are $A_1,A_2,A_3,A_4,A_5$ and
  $A_1,A_3,A_5,A_2,A_4$.
\end{example}
\begin{example}\label{ex:smartsearch}
  Let $\Delta$ be the simple rank four matroid on eight vertices with
  the following facets:
  \begin{gather*}
    1235, 1236, 1237, 1238, 1245, 1246, 1247, 1248, 1256, 1257, 1268,
    1278, 1345, 1346, 1347, \\ 1348, 1357, 1358, 1367, 1368, 1456,
    1458, 1467, 1478, 1567, 1568, 1578, 1678, 2356, 2357, \\ 2358,
    2456, 2457, 2458, 2568, 2578, 3456, 3457, 3458, 3567, 3568, 4567,
    4578,5678.
  \end{gather*}
  Precisely, $\Delta$ is a series-extension ($15$ is a cocircuit) of
  the Fano matroid.  The largest example that we tried our method on
  is the rank four matroid $\Delta_{\aaa}$ on 20 vertices whose
  simplification is $\Delta$ and whose parallel classes have sizes
  $(1,2,3,4,1,3,4,2)$.  We have
  \[ 
  h^{\Delta_{\aaa}} = (1, 4, 9, 16, 25, 36, 49, 64, 81, 100, 112, 116,
  111, 96, 70, 40, 14)
  \] 
  which means that enumeration of order ideals is entirely pointless.
  However, using Method~\ref{meth:alex} we found that this vector is a
  pure $O$-sequence.  It equals the $f$-vector of the order ideal
  \begin{gather*}
    \Gamma = \< bc^{2}d^{13}, bc^6d^9, b^4c^3d^9, bc^{10}d^5, b^8c^3d^5,
    bc^{12}d^3, b^4c^9d^3, \\ \qquad \quad a^9b^3c^4, a^5b^9c^2, bc^{15}, a^5b^3c^8,
    b^{14}c^2, a^2b^{12}c^2, a^2b^{10}c^4 \>.
  \end{gather*}
  The Artinian monomial level algebra with $\kk$-basis $\Gamma$ is
  $\kk[a,b,c,d]/I$ where
  \begin{gather*}
    I = \big(
    a^{10},a^6b^4,a^3b^{10},ab^{13},b^{15},a^3b^4c^3,b^{11}c^3,a^6c^5,ab^4c^5,b^5c^5,ac^9,b^2c^{10},\\
    \qquad \quad
    c^{16},ad,b^9d,b^5c^4d,c^{13}d,b^2c^4d^4,c^{11}d^4,b^5d^6,c^7d^6,b^2d^{10},c^3d^{10},d^{14}
    \big).
  \end{gather*}
\end{example}

\begin{remark}
  The number of different $h$-vectors of coloop free matroids is equal
  to the number of different $f$-vectors of coloop free matroids.
  Since matroids are very particular pure multicomplexes, the number
  of their $f$-vectors is smaller than the number of pure
  $O$-sequences (which are $f$-vectors of pure multi-complexes).
  Therefore, it seems plausible that the probability of finding a pure
  $O$-sequence equal to the $h$-vector of a matroid tends to zero as
  the parameters grow.  This limits the usefulness of random search
  for order ideals in larger examples.
\end{remark}

\bibliographystyle{amsplain}
\bibliography{stanley}
\enlargethispage{10ex}
 \end{document}